\newtheorem{theorem}{Theorem}[section]
\newtheorem{lemma}[theorem]{Lemma}
\newtheorem*{thmM}{Theorem 3.2}
\newtheorem*{proM}{Proposition 2.1}
\def\S{{\mathbb S}}
\def\Z{{\mathbb Z}}
\theoremstyle{definition}
\newtheorem{example}[theorem]{Example}
\newtheorem{proposition}[theorem]{Proposition}
\newtheorem{corollary}[theorem]{Corollary}
\newtheorem{remark}[theorem]{Remark}
\numberwithin{equation}{section}
\def\T{{\mathbb T}}
\def\S{{\mathbb S}}
\begin{document}
\title{On the group structure of $[\Omega \mathbb S^2, \Omega Y]$}
\author{Marek Golasi\'nski}
\address{Institute of Mathematics, Casimir the Great University,
pl.\ Weyssenhoffa 11, 85-072 Bydgoszcz, Poland}
\email{marek@ukw.edu.pl}
\author{Daciberg Gon\c calves}
\address{Dept. de Matem\'atica - IME - USP, Caixa Postal 66.281 - CEP 05314-970,
S\~ao Paulo - SP, Brasil}
\email{dlgoncal@ime.usp.br}
\author{Peter Wong}
\address{Department of Mathematics, Bates College, Lewiston, ME 04240, U.S.A.}
\email{pwong@bates.edu}
\thanks{}

\begin{abstract} Let $J(X)$ denote the James
construction on a space $X$ and $J_n(X)$ be the $n$-th stage of the James filtration of $J(X)$. It is known that $[J(X),\Omega Y]\cong \lim\limits_{\leftarrow}
[J_n(X),\Omega Y]$ for any space $Y$. When $X=\mathbb S^1$, the circle, $J(\mathbb S^1)=\Omega \Sigma \mathbb S^1=\Omega \mathbb S^2$. Furthermore,
there is a bijection between $[J(\mathbb S^1),\Omega Y]$ and the product $\prod_{i=2}^\infty \pi_i(Y)$, as sets.
In this paper, we describe the group structure of $[J_n(\mathbb S^1),\Omega Y]$ by
determining the co-multiplication structure on the suspension $\Sigma J_n(\mathbb S^1)$.
\end{abstract}
\date {\today}
\keywords{Cohen groups, Fox torus homotopy groups, James construction, Whitehead products}
\subjclass[2010]{Primary: 55Q05, 55Q15, 55Q20; secondary: 55P35}
\maketitle

\section*{Introduction}\setcounter{section}{0}
 Groups of homotopy classes of maps $[\Omega \Sigma X, \Omega Y]$ from the loop space of the suspension $\Sigma X$ to the loop space of $Y$ play an important role in classical homotopy theory. These groups have been used to give functorial homotopy decompositions of loop suspensions via modular representation theory and to investigate the intricate relationship between Hopf invariants and looped Whitehead products. In the special case when $X=\S^1$ and $Y=\S^2$, the authors in \cite{cohen,cohen-sato,cohen-wu} explored the relationship between $[\Omega \S^2, \Omega \S^2]$ and the Artin's pure braid groups via Milnor's
free group construction $F[K]$ (see \cite{M2}) for a simplicial set $K$. For $K=\S^1$, the geometric realization of $F[\S^1]$ has the homotopy type of $\Omega \Sigma \S^1=J(\S^1)$, the James construction on $\S^1$. In general, the James construction $J(X)$ on $X$ admits a filtration $J_1(X) \subseteq J_2(X) \subseteq\cdots$ so that
$\displaystyle{[J(X),\Omega Y]\cong\lim_{\leftarrow} [J_n(X),\Omega Y]}$. The {\it Cohen groups} $[J_n(X),\Omega Y]$ and the {\it total Cohen group} $[J(X),\Omega Y]$ have been studied in \cite{wu2}
via the simplicial group $\{[X^n, \Omega Y]\}_{n\ge 1}$. In his original paper \cite{J}, James introduced $J(X)$ as a model for $\Omega\Sigma X$, the loop 
space of the suspension $\Sigma X$ of the space $X$ and showed that $\Sigma J(X)$ has the homotopy type of the suspension of the wedge of self smash products of $X$. This shows that $[\Omega \S^2, \Omega Y]=[J(\S^1),\Omega Y]$, {\it as a set}, is in one-to-one correspondence with the direct product $\prod_{i= 2}^\infty \pi_i(Y)$ of the higher homotopy groups of $Y$. However, the group structure of $[J(\S^1),\Omega Y]$ is far from being abelian.

In \cite{cohen-sato}, the group $[J_n(\S^1),\Omega Y]$ is shown to be a central extension with kernel $\pi_{n+1}(Y)$ and quotient
$[J_{n-1}(\S^1),\Omega Y]$. In \cite{ggw5}, it is shown that $\pi_{n+1}(Y)$ is in fact central in a larger group $\tau_{n+1}(Y)$,
the Fox torus homotopy group. The proof in \cite{ggw5} relies on embedding $[J_n(\S^1),\Omega Y]$ into $\tau_{n+1}(Y)$. Fox \cite{fox}
introduced the torus homotopy groups such that the Whitehead products, when embedded as elements of a torus homotopy group, become commutators.
Indeed, the Fox torus homotopy group $\tau_n(Y)$ is completely determined by the homotopy groups $\pi_i(Y)$ for $1\le i\le n$ and the Whitehead products.
Furthermore, Fox determined whether $\alpha \in \pi_{k+1}(Y), \beta\in \pi_{l+1}(Y)$, when embedded in
$\tau_n(Y)$ commute. Following \cite{fox}, we consider a $k$-subset ${\bf a}$ and an $l$-subset ${\bf b}$ of the set of indices $\{1,2,\ldots, n\}$
for some $n\ge k+l$. The sets ${\bf a}$ and ${\bf b}$ determine two embeddings $\pi_{i+1}(Y) \to \tau_{n+1}(Y)$ for $i=k,l$. Denote by $\alpha^{{\bf a}}$ and $\beta^{{\bf b}}$ the corresponding images of $\alpha$ and $\beta$ in $\tau_{n+1}(Y)$.

\begin{proM}
\begin{enumerate}
\item[(1)] If ${\bf a} \cap {\bf b}=\emptyset$ then $(\alpha^{{\bf a}},\beta^{{\bf b}})=(-1)^{w+(|{\bf a}|-1)}[\alpha, \beta]^{{\bf a}\cup {\bf b}}$.\\
\item[(2)] If ${\bf a} \cap {\bf b}\ne \emptyset$ then $(\alpha^{{\bf a}},\beta^{{\bf b}})=1$.
\end{enumerate}
Here $(x,y)$ denotes the commutator $xyx^{-1}y^{-1}$ of $x$ and $y$ and $w=\Sigma_{i\in {\bf a}, j\in {\bf b}} w_{i,j}$, where $w_{i,j}=1$ if $j<i$ and $w_{i,j}=0$ otherwise.
\end{proM}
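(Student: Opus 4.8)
The plan is to realize $\tau_{n+1}(Y)$ as the group $[\Sigma\T^n_+, Y]$ of homotopy classes of maps out of the suspension of the based $n$-torus $\T^n_+$, with the group structure coming from the co-$H$ comultiplication (the pinch map) on the suspension $\Sigma\T^n_+$. Under the James-type splitting $\Sigma\T^n_+ \simeq \bigvee_{S\subseteq\{1,\dots,n\}}\Sigma\big(\bigwedge_{i\in S}\S^1\big)=\bigvee_S S^{|S|+1}$, a $k$-subset ${\bf a}$ indexes a wedge summand $S^{k+1}$, and the embedding $\alpha\mapsto\alpha^{{\bf a}}$ is precomposition of $\alpha\colon S^{k+1}\to Y$ with the projection $r_{{\bf a}}\colon\Sigma\T^n_+\to S^{k+1}$ onto that summand. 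First I would check that this identification agrees with Fox's definition and with the embeddings of \cite{ggw5,fox}, so that the commutator $(\alpha^{{\bf a}},\beta^{{\bf b}})$ may be computed entirely inside $[\Sigma\T^n_+, Y]$.

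Next, since $\alpha^{{\bf a}}$ and $\beta^{{\bf b}}$ both factor through the projection $\Sigma\T^n_+\to\Sigma\T^{{\bf a}\cup{\bf b}}_+$ induced by collapsing the circles outside ${\bf a}\cup{\bf b}$, and this projection is the suspension of a map and hence a co-$H$-map, the commutator is detected after it. Thus I may assume ${\bf a}\cup{\bf b}=\{1,\dots,m\}$ with $m=|{\bf a}\cup{\bf b}|$ and work in $[\Sigma\T^m_+, Y]$. The core step is the standard principle (going back to \cite{fox}) identifying group commutators in $[\Sigma Z, Y]$ with generalized Whitehead products: the commutator $(\alpha^{{\bf a}},\beta^{{\bf b}})$ is represented by a composite $\Sigma\T^m_+\xrightarrow{\Phi}\Sigma\big((\S^1)^{\wedge k}\wedge(\S^1)^{\wedge l}\big)\xrightarrow{[\alpha,\beta]}Y$, where $\Phi$ is built from the reduced diagonal of $\T^m$ restricted to the ${\bf a}$- and ${\bf b}$-coordinates and $[\alpha,\beta]$ is the generalized Whitehead product, which on these smash factors reduces to the ordinary Whitehead product in $\pi_{k+l+1}(Y)$.

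From here the two cases fall out of the structure of $\Phi$. When ${\bf a}\cap{\bf b}=\emptyset$, the coordinates are disjoint, so $\Phi$ is, up to reordering the circle factors into their natural order, the projection $\Sigma\T^m_+\to S^{m+1}$ onto the top summand labelled ${\bf a}\cup{\bf b}$; composing with $[\alpha,\beta]$ yields $[\alpha,\beta]^{{\bf a}\cup{\bf b}}$, and the reordering contributes a sign. When ${\bf a}\cap{\bf b}\ne\emptyset$, pick $i$ in the intersection; then $\Phi$ factors through the reduced diagonal $\bar\Delta\colon\S^1\to\S^1\wedge\S^1$ on the $i$-th circle. Because $\S^1=\Sigma\S^0$ is a suspension, its reduced diagonal is nullhomotopic, so $\Phi$ is null and the commutator is trivial, giving part (2).

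The main obstacle is the sign bookkeeping in part (1), i.e.\ proving the exponent is exactly $w+(|{\bf a}|-1)$. I would split it into two contributions: the shuffle sign $(-1)^w$, where $w=\sum_{i\in{\bf a},\,j\in{\bf b}}w_{i,j}$ counts the inversions incurred when permuting the interleaved ${\bf a}$- and ${\bf b}$-circles back into increasing order (each transposition acting on a smash of circles contributes a factor $-1$), and the correction $(-1)^{|{\bf a}|-1}$ coming from the orientation convention for the universal Whitehead map together with the graded-commutativity of the Whitehead product. Getting these conventions to line up with the embeddings of \cite{fox} --- in particular fixing the order in which the suspension coordinate and the torus coordinates are smashed --- is the delicate point, and I would verify it on the smallest cases ($k=l=1$, with adjacent and with non-adjacent indices) before asserting the general formula.
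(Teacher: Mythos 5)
First, a point of reference: the paper itself gives no proof of this proposition. It is stated twice (in the Introduction and as Proposition \ref{gen-Fox-W-product}) and is explicitly attributed to Fox's paper \cite{fox}, so there is no internal argument to compare yours against; your proposal has to stand on its own as a reconstruction of Fox's theorem. Its architecture is sound and is indeed the natural modern route: identify $\tau_{n+1}(Y)$ with $[\Sigma(\T^n\sqcup *),Y]$, realize $\alpha\mapsto\alpha^{{\bf a}}$ as precomposition with the suspended coordinate projection onto the smash factor indexed by ${\bf a}$, reduce to ${\bf a}\cup{\bf b}=\{1,\ldots,m\}$ via the (injective, split) homomorphism induced by collapsing the remaining circles, and invoke the standard identity expressing the commutator of two classes factoring through $\Sigma\pi_A$ and $\Sigma\pi_B$ as the generalized Whitehead product precomposed with the suspension of the reduced-diagonal map. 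Your part (2) is complete: if $i\in{\bf a}\cap{\bf b}$, the map $\Phi$ factors through $\mathrm{id}\wedge\bar\Delta\wedge\mathrm{id}$ with $\bar\Delta\colon\S^1\to\S^1\wedge\S^1$ the reduced diagonal of the co-$H$-space $\S^1$, which is nullhomotopic, and smashing a nullhomotopic map with an identity is nullhomotopic. The identification of $(-1)^{w}$ with the sign of the shuffle sorting the interleaved ${\bf a}$- and ${\bf b}$-coordinates is also correct, since $w$ counts exactly the inversions between the two blocks.

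The gap is in the remaining factor $(-1)^{|{\bf a}|-1}$, which is the only content of part (1) beyond ``the commutator is $\pm$ the Whitehead product embedded via ${\bf a}\cup{\bf b}$.'' You assert that it comes from ``the orientation convention for the universal Whitehead map together with graded-commutativity,'' and you propose to fix the conventions by checking the smallest cases $k=l=1$. But when $|{\bf a}|=1$ the factor $(-1)^{|{\bf a}|-1}$ equals $+1$, so every case you propose to examine is blind to the $k$-dependence of the exponent: verification at $k=l=1$ cannot distinguish $w+(|{\bf a}|-1)$ from $w$, from $w+(|{\bf b}|-1)$, or from $w+(|{\bf a}|-1)(|{\bf b}|-1)$. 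What is missing is a general orientation computation, valid for all $k$: for instance, comparing the join parametrization $\S^{k}\ast\S^{l}\cong\S^{k+l+1}$ implicit in the classical Whitehead product with the suspension-first ordering $\S^1\wedge(\S^1)^{\wedge k}$ forced by Fox's loop coordinate (transposing the loop coordinate past the $k$ circle factors of ${\bf a}$ is precisely what produces a $k$-dependent sign), together with the fixed commutator convention $(x,y)=xyx^{-1}y^{-1}$. This is a finite bookkeeping argument, so your approach is completable; but as written, the decisive sign is asserted rather than derived, and the proposed checking strategy could not detect an error in it.
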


Since the groups $[J_n(\S^1),\Omega Y]$ can be embedded as subgroups of the Fox torus homotopy groups $\tau_{n+1}(Y)$, the group structure of
$[J_n(\S^1),\Omega Y]$ is induced by that of $\tau_{n+1}(Y)$. In this paper, we obtain the group structure of $[J_n(\mathbb S^1),\Omega Y]$ via the torus homotopy group studied in \cite{ggw4} together with a recent result by Arkowitz and Lee \cite{AL2} on the co-$H$-structures of a wedge of spheres. The following is our main theorem.

\begin{thmM} The suspension co-$H$-structure $$\overline{\mu}_n : \Sigma J_n(\mathbb S^1)
\to \Sigma J_n(\mathbb S^1) \vee \Sigma J_n(\mathbb S^1)$$ for $n\ge 1$ is given by $\overline{\mu}_n k_i\simeq\iota_1k_i+\iota_2k_i+P_i$,
where the perturbation $P_i\simeq\sum_{l=0}^i\phi(i-l,i-1)P_{l,i}$  with the Fox function $\phi$ and
$$P_{l,i}: \mathbb S^{i+1}\to \mathbb S^{i-l+1}\vee \mathbb S^{l+1}\stackrel{k_{i-l}\vee k_l}{\hookrightarrow}\Sigma J_n(\mathbb S^1)\vee \Sigma J_n(\mathbb S^1)$$
determined by the Whitehead product map $\mathbb S^{i+1}\to \mathbb S^{i-l+1}\vee \mathbb S^{l+1}$ for $i=0,\ldots,n$ and $l=0,\ldots,i$.
\end{thmM}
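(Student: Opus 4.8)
\emph{Proof strategy.} The plan is to identify $\overline{\mu}_n$ with the group law on $[J_n(\mathbb{S}^1),\Omega Y]$ by adjunction and then to express it in the coordinates supplied by James' theorem. The loop multiplication on $\Omega Y$ makes $[J_n(\mathbb{S}^1),\Omega Y]$ a group, and the adjunction isomorphism $[J_n(\mathbb{S}^1),\Omega Y]\cong[\Sigma J_n(\mathbb{S}^1),Y]$ is a group isomorphism precisely when $\Sigma J_n(\mathbb{S}^1)$ is equipped with the co-$H$ structure $\overline{\mu}_n$, so that describing $\overline{\mu}_n$ is the same as describing the group law. Since $(\mathbb{S}^1)^{\wedge i}\simeq\mathbb{S}^i$, James' decomposition gives a homotopy equivalence $\Sigma J_n(\mathbb{S}^1)\simeq\bigvee_{i=1}^n\mathbb{S}^{i+1}$, with $k_i\colon\mathbb{S}^{i+1}\hookrightarrow\Sigma J_n(\mathbb{S}^1)$ the inclusion of the $i$-th summand and $k_0$ the basepoint. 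This equivalence is not a co-$H$ map, so the suspension comultiplication is transported to a co-$H$ structure differing from the naive wedge comultiplication $\iota_1k_i+\iota_2k_i$ by the perturbation $P_i$ we must compute.

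Next I would invoke the Arkowitz--Lee classification \cite{AL2} of co-$H$ structures on a wedge of spheres. It yields that on each summand $\overline{\mu}_nk_i\simeq\iota_1k_i+\iota_2k_i+P_i$, with $P_i$ lying in the subgroup of $\pi_{i+1}(\Sigma J_n(\mathbb{S}^1)\vee\Sigma J_n(\mathbb{S}^1))$ generated by Whitehead products of the two families of inclusions. Decomposing this group by Hilton's theorem and imposing the co-unit and co-associativity axioms, one sees that in the range at hand the only surviving contributions are the length-two products $P_{l,i}\colon\mathbb{S}^{i+1}\to\mathbb{S}^{i-l+1}\vee\mathbb{S}^{l+1}\hookrightarrow\Sigma J_n(\mathbb{S}^1)\vee\Sigma J_n(\mathbb{S}^1)$; the co-unit axiom in particular forces the degenerate terms $l=0$ and $l=i$, which factor through $k_0$, to vanish. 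Thus $P_i\simeq\sum_{l=0}^i c_{l,i}P_{l,i}$ for integers $c_{l,i}$ still to be pinned down.

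Finally I would determine the $c_{l,i}$ by dualizing into the Fox torus homotopy group. Applying $[-,Y]$ turns $\overline{\mu}_n$ into the multiplication on $[\Sigma J_n(\mathbb{S}^1),Y]\cong\prod_{i=1}^n\pi_{i+1}(Y)$, under which each $P_{l,i}$ contributes a Whitehead-product commutator $[\alpha,\beta]$ between the class in $\pi_{i-l+1}(Y)$ and the class in $\pi_{l+1}(Y)$. The embedding $[J_n(\mathbb{S}^1),\Omega Y]\hookrightarrow\tau_{n+1}(Y)$ computes this same commutator by Proposition 2.1 as $(\alpha^{\mathbf{a}},\beta^{\mathbf{b}})=(-1)^{w+(|\mathbf{a}|-1)}[\alpha,\beta]^{\mathbf{a}\cup\mathbf{b}}$ for disjoint index sets $\mathbf{a},\mathbf{b}$, while part (2) of that proposition makes overlapping index sets contribute nothing. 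Matching the two computations identifies $c_{l,i}$ with the value $\phi(i-l,i-1)$ of the Fox function recording this sign, which is the asserted formula.

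The main obstacle is this last matching. It requires tracking the sign conventions that relate Samelson/Whitehead products to group commutators, counting precisely which $k$- and $l$-subsets of $\{1,\dots,n\}$ realize a given splitting $(i-l,l)$, and verifying that the signs $(-1)^{w+(|\mathbf{a}|-1)}$ collate into exactly $\phi(i-l,i-1)$. The vanishing supplied by Proposition 2.1(2) is what keeps the answer a single sum over $l$, rather than an expression involving repeated or iterated products; and confirming in the second step that no higher Whitehead products slip into the Arkowitz--Lee perturbation is the other point that needs care.
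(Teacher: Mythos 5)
Your proposal assembles the same two main ingredients as the paper --- the Arkowitz--Lee normal form $\overline{\mu}_n k_i\simeq\iota_1k_i+\iota_2k_i+P_i$ from \cite[Lemma 3.3]{AL2}, and the embedding $[J_n(\S^1),\Omega Y]\hookrightarrow\tau_{n+1}(Y)$ together with Proposition \ref{gen-Fox-W-product} to produce the coefficients --- and your Yoneda-style framing (a co-$H$-structure is determined by the natural binary operation it induces) is a legitimate substitute for the paper's setup. The genuine gap is in your second step: you claim that Hilton's theorem plus the co-unit and co-associativity axioms force $P_i$ to be a linear combination of the length-two Whitehead products $P_{l,i}$ of the inclusions. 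This is false as a formal implication. The co-unit axiom is \emph{exactly} the Arkowitz--Lee condition $p_1P_i=0=p_2P_i$, and by Hilton's theorem \cite{H} the common kernel of $p_{1*}$ and $p_{2*}$ on $\pi_{i+1}(\S\vee\S)$ is much larger than the span of the $P_{l,i}$: it contains all compositions $[\iota_1k_a,\iota_2k_b]\circ\gamma$ with $\gamma\in\pi_{i+1}(\S^{a+b+1})$ and $a+b<i$, and all iterated mixed Whitehead products. Concretely, for $i=3$ and $n\ge 3$ the elements $[\iota_1k_1,\iota_2k_1]\circ\eta$ (with $\eta\in\pi_4(\S^3)\cong\Z_2$) and $[[\iota_1k_1,\iota_2k_1],\iota_1k_1]\in\pi_4$ both vanish under both projections, so they are admissible perturbation terms that your argument never excludes; co-associativity does not visibly kill them either, and indeed the whole point of \cite{AL2} is that a wedge of spheres carries many co-$H$-structures distinguished precisely by such terms. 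That the \emph{suspension} comultiplication of $\Sigma J_n(\S^1)$ has no such contributions is a substantive computation about $\overline{\mu}_n$ itself, not a consequence of the axioms, and you flag it only as ``the other point that needs care'' after having already asserted it.

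This is exactly the ingredient the paper's induction supplies. Writing $\Sigma J_{n+1}(\S^1)\simeq\Sigma J_n(\S^1)\vee\S^{n+2}$ and using cellular approximation to see $\overline{\mu}_{n+1}|_{J_n(\S^1)}=\overline{\mu}_n$, the paper reduces to the top cell and identifies $\overline{\mu}_{n+1}k_{n+1}$ with the composite $\overline{\mu}_{n+1}^2$, which factors through the Fox space $F_{n+2}(\S^1)$ and its co-action ($\hat{\mu}_{n+2}^2$, computed in \cite[Theorem 2.2]{ggw4}), followed by $p_{n+1}\vee p_{n+1}$. On that route the perturbation is \emph{manifestly} a sum of length-two Whitehead products only, with one summand for each pair of disjoint index sets as in Proposition \ref{gen-Fox-W-product}, so the coefficient of $P_{n+1,i}$ is the sum of the Fox numbers $(-1)^{w+(|{\bf a}|-1)}$ over those configurations --- which is $\phi(n+1-i,n)$ \emph{by the definition} of the Fox function. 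This last point also bears on your third step: you defer the collation of signs into $\phi(i-l,i-1)$ as ``the main obstacle,'' whereas in the paper's approach it is essentially immediate from how $\phi$ is defined (the closed formula of Theorem \ref{main-phi} is a separate, purely combinatorial result). So your plan has the right pieces, but the reduction to length-two products needs a construction-level argument like the paper's factorization through $F_{n+2}(\S^1)$; it cannot be obtained from the co-$H$ axioms and Hilton's decomposition alone.
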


Here the perturbations $P_i$ are used in \cite{AL2} to measure the deviation of the co-$H$-structure of a space of homotopy type of a wedge of spheres from the usual coproduct of the suspension co-$H$-structures of spheres. The function $\phi$ derived from Proposition \ref{gen-Fox-W-product} is used to determine the coefficients of Whitehead products.

This paper is organized as follows. In Section 1, we recall the Fox torus homotopy groups $\tau_{n+1}(Y)$ and give examples of $[J_n(\S^1),\Omega Y]$,
where the group structure can be obtained via $\tau_{n+1}(Y)$. In Section $2$, we obtain a closed form solution to a recurrence relation on the function
$\phi$ which gives the coefficient of the Whitehead product in Proposition \ref{gen-Fox-W-product}. Section $3$ combines the result of \cite{AL2}
and the result of Section $2$ to obtain the co-$H$-structure of $\Sigma J_n(\S^1)$. In Section 4, we revisit and generalize the examples of Section $1$.
In particular, we give necessary and sufficient conditions (see Proposition \ref{3_stem}) for $[J_{4n+1}(\S^1),\Omega \S^{2n}]$ to be abelian when $n$
is not a power of $2$. We also investigate when two torsion elements in $[J(\S^1),\Omega Y]$ commute. To end
this introduction, we want to point out that the Fox torus homotopy groups $\tau_{n+1}({\rm Conf}(n))$ of the
configuration space of $n$ distinct points in $\mathbb R^3$ were used by F.\ Cohen et al.\ \cite{cohen-et-al} to
give an alternative proof of a result of U.\ Koschorke, which is related  to Milnor's link homotopy \cite{M} and
homotopy string links examined by Habegger-Lin  \cite{H-L}, that the $\kappa$-invariant for the Brunnian links
in $\mathbb R^3$ is injective.

We thank the anonymous referee for his/her careful reading of the earlier version of the manuscript as well as helpful comments that lead to a better exposition of the paper.

\section{Fox torus homotopy groups}
In this section, we make some calculation on $[J_n(\S^1),\Omega Y]$ using the Fox torus homotopy groups. First, we recall from \cite{fox} the definition of the $n$-th Fox torus homotopy group of
a connected pointed space $Y$, for $n\ge 1$. Let $y_0$ be a basepoint of $Y$, then
$$
\tau_n(Y)\cong \tau_n(Y,y_0)=\pi_1(Y^{\T^{n-1}},\overline{y_0}),
$$
where $Y^{\T^{n-1}}$ denotes the space of unbased maps from the $(n-1)$-torus $\T^{n-1}$ to $Y$ and
$\overline{y_0}$ is the constant map at $y_0$. When $n=1$, $\tau_1(Y)=\pi_1(Y)$.

To re-interpret Fox's result, we showed in \cite{ggw1} that
$$
\tau_n(Y)\cong [F_n(\S^1),Y]
$$
the group of homotopy classes of basepoint preserving maps from the
reduced suspension $F_n(\S^1):=\Sigma (\T^{n-1}\sqcup *)$ of $\T^{n-1}$ adjoined with a distinguished point
to $Y$.

\addtocounter{theorem}{1}

One of the main results of \cite{fox} is the following split exact sequence:

\begin{equation}\label{fox-split}
0\to \prod_{i=2}^n \pi_i(Y)^{\sigma_i} \to \tau_n(Y) \stackrel{\dashleftarrow}{\to} \tau_{n-1}(Y) \to 0,
\end{equation}
where $\sigma_i=\binom{n-2}{i-2}$, the binomial coefficient.

With the isomorphism $\tau_{n-1}(\Omega Y)\cong \prod_{i=2}^n \pi_i(Y)^{\sigma_i}$ shown in \cite[Theorem 1.1]{ggw1},
the sequence \eqref{fox-split} becomes

\begin{equation}\label{general-fox-split}
0\to \tau_{n-1}(\Omega Y) \to \tau_n(Y) \stackrel{\dashleftarrow}{\to} \tau_{n-1}(Y) \to 0.
\end{equation}
 Here, the projection $\tau_n(Y) \to \tau_{n-1}(Y)$ is induced by the suspension of the inclusion
$\T^{n-2}\sqcup * \hookrightarrow \T^{n-1}\sqcup *$ given
by $(t_1,\ldots,t_{n-2})\mapsto (1,t_1,\ldots,t_{n-2})$ and the
section $\tau_{n-1}(Y) \to \tau_n(Y)$ is the homomorphism induced by the suspension of the
projection $\T^{n-1} \sqcup * \to \T^{n-2}\sqcup *$ given by $(t_1,\ldots,t_{n-1}) \mapsto (t_2,\ldots,t_{n-1})$. This
splitting (section) gives the semi-direct product structure so that the action
$$\bullet : \tau_{n-1}(Y)\times \tau_{n-1}(\Omega Y)\longrightarrow\tau_{n-1}(\Omega Y)$$
of the quotient $\tau_{n-1}(Y)$ on the kernel $\tau_{n-1}(\Omega Y)$ is simply conjugation
in $\tau_n(Y)$ by the image of $\tau_{n-1}(Y)$ under the section. It follows from the work
of Fox (in particular Proposition \ref{gen-Fox-W-product}) that the action is determined by the
Whitehead products. More precisely, given $\alpha \in \pi_i(Y), \beta \in \pi_j(Y)$, let
$\hat \alpha$ and $\hat \beta$ be the respective images of $\alpha$ in $\tau_{n-1}(Y)$ and
of $\beta$ in $\tau_{n-1}(\Omega Y)$. Then
$$\hat \alpha \bullet \hat \beta =\widehat {[\alpha,\beta]} \hat \beta,$$
where $\widehat{[\alpha,\beta]}$ denotes the image in $\tau_n(Y)$ of the Whitehead product $[\alpha,\beta]$.

Note that $F_n(\S^1)\simeq (\Sigma \T^{n-1}) \vee \S^1$ but
$[F_n(\S^1),Y]\cong [\Sigma \T^{n-1},Y] \rtimes \pi_1(Y)$.

\begin{example}\label{ex1}
The group $[J_2(\mathbb S^1),\Omega \mathbb S^2]$ is abelian (in fact isomorphic to $\mathbb Z \oplus \mathbb Z$, see Example \ref{ex1-revisit}). We now examine the multiplication in $[J_2(\mathbb S^1),\Omega \mathbb S^2]$ by embedding this inside the group $\tau_3(\mathbb S^2)$. First, we note that $\tau_3(\mathbb S^2) \cong (\pi_3(\mathbb S^2) \oplus \pi_2(\mathbb S^2)_{\{1\}}) \rtimes \pi_2(\mathbb S^2)_{\{2\}}$. Here we follow the work of \cite{fox} by using the indexing set $\{1,2\}$ for the two copies of $\pi_2(\mathbb S^2)$ in $\tau_3(\mathbb S^2)$. Let $\alpha, \beta \in \pi_2(\mathbb S^2)$ and we write $\alpha^{\{i\}}, \beta^{\{i\}} \in \pi_2(\mathbb S^2)_{\{i\}}$ for $i=1,2$. Using the semi-direct product structure and the representation of elements of $[J_2(\mathbb S^1),\Omega \mathbb S^2]$ in $\tau_3(\mathbb S^2)$, the product of two elements of $[J_2(\mathbb S^1),\Omega \mathbb S^2]$ is given by
\begin{equation*}
\begin{aligned}
&(\alpha^{\{1\}},\alpha^{\{2\}})\cdot (\beta^{\{1\}}, \beta^{\{2\}})\\
=&(\alpha^{\{1\}}+(\alpha^{\{2\}}\bullet \beta^{\{1\}}), \alpha^{\{2\}}+\beta^{\{2\}})\\
=&(\alpha^{\{1\}}+\beta^{\{1\}}+(-1)^{w+(m-1)}[\alpha, \beta]^{\{1,2\}}, \alpha^{\{2\}}+\beta^{\{2\}}) \quad \text{(here $m=1,w=1$)} \\
=&(\alpha^{\{1\}}+\beta^{\{1\}}+(-1)[\alpha, \beta]^{\{1,2\}}, \alpha^{\{2\}}+\beta^{\{2\}}).
\end{aligned}
\end{equation*}
Conversely,
\begin{equation*}
\begin{aligned}
&(\beta^{\{1\}}, \beta^{\{2\}})\cdot (\alpha^{\{1\}},\alpha^{\{2\}})\\
=&(\beta^{\{1\}}+\alpha^{\{1\}}+(-1)^{w+(m-1)}[\beta,\alpha]^{\{1,2\}}, \beta^{\{2\}}+\alpha^{\{2\}}) \quad \text{here $m=1$} \\
=&(\alpha^{\{1\}}+\beta^{\{1\}}+(-1)[\beta,\alpha]^{\{1,2\}}, \alpha^{\{2\}}+\beta^{\{2\}}).
\end{aligned}
\end{equation*}
Since both $\alpha$ and $\beta$ have dimension $2$, the Whitehead product $[\alpha,\beta]$ coincides with $[\beta, \alpha]$.
Thus the above calculation shows that $[J_2(\mathbb S^1),\Omega \mathbb S^2]$ is abelian.
\end{example}

However, the groups $[J_n(X),\Omega Y]$ are non-abelian in general.
\begin{example}\label{ex2}
The group $[J_3(\mathbb S^1), \Omega Y]$ is non-abelian for $Y=\mathbb S^2 \vee \mathbb S^3$. First, the Fox homotopy group $\tau_4(Y)\cong \tau_3(\Omega Y) \rtimes \tau_3(Y)$. Since $Y$ is 1-connected, using the Fox short split exact sequence \eqref{general-fox-split},
we obtain
\begin{equation}
\begin{aligned}
\tau_4(Y) &\cong \left( \pi_4(Y) \oplus 2\pi_3(Y) \oplus \pi_2(Y)\right)\rtimes \left((\pi_3(Y)\oplus \pi_2(Y))\rtimes \pi_2(Y)\right).
\end{aligned}
\end{equation}
Let $\iota_1:\mathbb S^2 \hookrightarrow \mathbb S^2 \vee \mathbb S^3$ and $\iota_2:\mathbb S^3 \hookrightarrow \mathbb S^2 \vee \mathbb S^3$ denote the canonical inclusions. The (basic) Whitehead product $[\iota_1,\iota_2]$ is a non-trivial element in $\pi_4(\mathbb S^2 \vee \mathbb S^3)$. When regarded as an element of the group $\tau_4(Y)$, the image of $[\iota_1,\iota_2]$ in $\tau_4(Y)$ is the commutator $(\hat a, \hat b)$, where
\begin{equation}
\begin{aligned}
\hat a&=\left( (1\oplus (1\oplus 1) \oplus a), ((1\oplus a),a)\right) \\
\hat b&=\left( (1\oplus (b\oplus b) \oplus 1), ((b\oplus 1),1)\right)
\end{aligned}
\end{equation}
with $a=[\iota_1]\in \pi_2(Y)$ and $b=[\iota_2]\in \pi_3(Y)$. By \cite[Theorem 2.2]{ggw5}, $\hat a, \hat b$ are
in the image of $[J_3(\mathbb S^1),\Omega Y]\hookrightarrow\tau_4(Y)$. Since the commutator $(\hat a, \hat b)$
is non-trivial, it follows that $[J_3(\mathbb S^1),\Omega Y]$ is non-abelian.
\end{example}

Now, we analyze the central extension
\begin{equation}\label{cohen-sequence}
0 \to \pi_{n+1}(Y) \to [J_n(\mathbb S^1),\Omega Y] \to [J_{n-1}(\mathbb S^1),\Omega Y] \to 0
\end{equation}
and give an example in which this extension does not split.
The calculation below makes use of torus homotopy groups.

\begin{example}\label{ex3}
Take $Y=\mathbb S^4$. Since $\S^4$ is $3$-connected, we have
$[J_1(\mathbb S^1),\Omega Y]=[J_2(\mathbb S^1),\Omega Y]=0$ and
$[J_3(\mathbb S^1),\Omega \mathbb S^4]\cong \pi_4(\mathbb S^4)\cong \mathbb Z$.
For $n=4$, the sequence
\eqref{cohen-sequence} becomes
$$0\to \pi_5(\mathbb S^4) \to [J_4(\mathbb S^1),\Omega \mathbb S^4] \to [J_3(\mathbb S^1),\Omega \mathbb S^4] \to 0.
$$
Note that the corresponding Fox split exact sequence is
$$
0\to \pi_5(\mathbb S^4)\oplus 3 \pi_4(\mathbb S^4) \to \tau_5(\mathbb S^4) \stackrel{\dashleftarrow}{\to} \tau_4(\mathbb S^4) \to 0
$$
and $\tau_4(\mathbb S^4)\cong \pi_4(\mathbb S^4)$. For dimensional reasons, $\tau_5(\mathbb S^4)$ does not contain any non-trivial
Whitehead products so that $\pi_5(\mathbb S^4)\oplus 3 \pi_4(\mathbb S^4)$ is central in $\tau_5(\mathbb S^4)$.
Thus, $\tau_5(\mathbb S^4)\cong \pi_5(\mathbb S^4)\oplus 4 \pi_4(\mathbb S^4)$ is abelian. It follows
that $$[J_4(\mathbb S^1),\Omega \mathbb S^4]\cong \pi_5(\S^ 4)\oplus\pi_4(\S^4)$$ is abelian.

For $n=5$, the corresponding Fox split exact sequence is
$$
0\to \pi_6(\mathbb S^4)\oplus 4 \pi_5(\mathbb S^4) \oplus 6 \pi_4(\mathbb S^4) \to \tau_6(\mathbb S^4) \stackrel{\dashleftarrow}{\to} \tau_5(\mathbb S^4) \to 0.$$
Again, for dimensional reasons, $\tau_6(\mathbb S^4)$ contains no non-trivial Whitehead products so that
$$[J_5(\S^1),\Omega \S^4]\cong \pi_6(\S^4) \oplus \pi_5(\S^4) \oplus \pi_4(\S^4).$$ It follows that $[J_5(\mathbb S^1),\Omega\mathbb S^4]$ is
abelian and the sequence \eqref{cohen-sequence} splits for $n=5$. To see this, we note that $[J_5(\mathbb S^1),\Omega\mathbb S^4]$ is of rank $1$ and
so it is either $\mathbb Z \oplus \mathbb Z_2 \oplus \mathbb Z_2$ or $\mathbb Z \oplus \mathbb Z_4$, where $\mathbb{Z}_n$ is the
cyclic group of order $n$. Since $\tau_6(\mathbb S^4)$ has no elements of order $4$ so we have $[J_5(\mathbb S^1),\Omega\mathbb S^4] \cong \mathbb Z \oplus \mathbb Z_2 \oplus \mathbb Z_2$.

When $n=6$, the sequence \eqref{cohen-sequence} becomes
$$
0\to \pi_7(\mathbb S^4) \to [J_6(\mathbb S^1),\Omega \mathbb S^4] \to [J_5(\mathbb S^1),\Omega \mathbb S^4] \to 0.
$$
By projecting $[J_6(\mathbb S^1),\Omega \mathbb S^4]$ onto $[J_3(\mathbb S^1),\Omega \mathbb S^4]$, the above sequence
gives rise to the following exact sequence
\begin{equation}\label{alt-exact}
0\to \mathcal W \to [J_6(\mathbb S^1),\Omega \mathbb S^4] \to [J_3(\mathbb S^1),\Omega \mathbb S^4]\cong \pi_4(\mathbb S^4)\cong \mathbb Z \to 0
\end{equation}
so that $[J_6(\mathbb S^1),\Omega \mathbb S^4] \cong \mathcal W \rtimes \pi_4(\mathbb S^4)$. Similarly, the corresponding Fox
split exact sequence can be written as
$$
0\to \widehat {\mathcal W} \to \tau_7(\S^4) \to \tau_4(\mathbb S^4)=\pi_4(\mathbb S^4) \to 0.
$$
Here, $\mathcal W$ is generated by elements of $\pi_i(\mathbb S^4)$ for $i=5,6,7$ while $\widehat {\mathcal W}$ is generated by elements of $\pi_i(\mathbb S^4)$ for $i=4,5,6,7$. It follows
that the action of $\pi_4(\mathbb S^4)$ on $\mathcal W$ is the same as the action of $\pi_4(\mathbb S^4)=\tau_4(\mathbb S^4)$ on
$\widehat {\mathcal W}$ and is determined by the Whitehead products. For dimensional reasons, if $x\in \mathcal W$, the Whitehead
product of $x$ with any element in $\pi_4(\mathbb S^4)$ will be zero in $\tau_7(\mathbb S^4)$ and hence $\pi_4(\mathbb S^4)$
acts trivially on $\mathcal W$. Thus,
$$
[J_6(\mathbb S^1),\Omega \mathbb S^4] \cong \mathcal W \times \pi_4(\mathbb S^4)\cong \bigoplus_{i=4}^7 \pi_i(\mathbb S^4)
$$
while $\tau_7(\mathbb S^4)$ is non-abelian. Moreover, the central extension
$$
0\to \pi_7(\mathbb S^4) \to [J_6(\mathbb S^1),\Omega \mathbb S^4] \to [J_5(\mathbb S^1),\Omega \mathbb S^4] \to 0
$$
splits because $\mathcal W\subset \pi_5(\mathbb S^4) \oplus \pi_6(\mathbb S^4) \subset \pi_4(\mathbb S^4) \oplus \pi_5(\mathbb S^4) \oplus \pi_6(\mathbb S^4) \cong [J_5(\mathbb S^1),\Omega \mathbb S^4]$.

Next, consider the case when $n=7$. Then, the sequence \eqref{cohen-sequence} becomes
$$
0\to \pi_8(\mathbb S^4) \to [J_7(\mathbb S^1),\Omega \mathbb S^4] \to [J_6(\mathbb S^1),\Omega \mathbb S^4] \to 0.
$$
The corresponding Fox split exact sequence is
$$
0\to \pi_8(\S^4) \oplus 6 \pi_7(\S^4) \oplus 15 \pi_6(\S^4)\oplus
20 \pi_5(\S^4) \oplus 15 \pi_4(\S^4) \to \tau_8(\mathbb S^4) \stackrel{\dashleftarrow}{\to} \tau_7(\mathbb S^4) \to 0.
$$
Again, for dimensional reasons, the only non-trivial Whitehead products lie in $\pi_8(\S^4)$ between the elements in $\pi_4(\S^4)$ and
$\pi_5(\S^4)$. Let $\iota_1, \iota_2$ be the generators of the cyclic groups $\pi_4(\mathbb S^4)\cong \mathbb Z$ and
$\pi_5(\mathbb S^4)\cong \mathbb Z_2$, respectively. Since there are $15$ copies of $\pi_5(\mathbb S^4)$ and $20$ copies of $\pi_4(\S^4)$ in
$\tau_7(\S^4)$, there are a total of $35$ copies of $\pi_4(\S^4)$ and $35$ copies of $\pi_5(\S^4)$ in $\tau_8(\S^4)$. According to \cite{fox},
the Whitehead products are determined by embedding $\pi_n(\S^4)$ in $\tau_r(\S^4)$ using $\binom{r-1}{n-1}$ embeddings. With $r=8$ and
$n=4,5$, there are $(35)^2$ possible pairings $(\iota_1',\iota_2')$, where $\iota_i'$ corresponds to the image of $\iota_i$
under one of $35$ embeddings.
\par Now, by the result of \cite{fox}, once an embedding for $\pi_4(\S^4)$ is chosen, there is a unique
embedding of $\pi_5(\S^4)$ so that non-trivial Whitehead products can be formed. Thus, there are exactly $35$ such products (commutators)
each of which is the generator of $\pi_8(\S^4)\cong \mathbb Z_2$. The product of these $35$ commutators
is non-trivial since $35\not\equiv\, 0\,(\bmod\, 2)$. If $\tilde \iota_i$ is a preimage of $\iota_i$ in
$[J_7(\mathbb S^1), \Omega \mathbb S^4]$ for $i=1,2$, the commutator $[\tilde \iota_1, \tilde \iota_2]$
is independent of choice of the preimages since $\pi_8(\mathbb S^4)$ is central in $[J_7(\mathbb S^1), \Omega \mathbb S^4]$.
This commutator is non-trivial in $[J_7(\mathbb S^1), \Omega \mathbb S^4]$ and this shows that the projection
$[J_7(\mathbb S^1), \Omega \mathbb S^4] \to [J_6(\mathbb S^1), \Omega \mathbb S^4]$ cannot have a section.
\par We point out that such non-trivial Whitehead products will persist and induce non-trivial commutators in
$[J_k(\mathbb S^1), \Omega \mathbb S^4]$ for any $k>7$. Thus, we conclude that $[J_k(\mathbb S^1), \Omega \mathbb S^4]$ is non-abelian for any $k\ge7$.
\end{example}

\section{The Fox number and the function $\phi$}
The Fox torus homotopy group $\tau_n(Y)$ is completely determined by the homotopy groups $\pi_i(Y)$ for $1\le i\le n$ and the Whitehead products.
Furthermore, Fox determined whether $\alpha \in \pi_{k+1}(Y), \beta\in \pi_{l+1}(Y)$, when embedded in
$\tau_n(Y)$ commute. Following \cite{fox}, we consider a $k$-subset ${\bf a}$ and an $l$-subset ${\bf b}$ of the set of indices $\{1,2,\ldots, n\}$
for some $n\ge k+l$. The sets ${\bf a}$ and ${\bf b}$ determine two embeddings $\pi_{i+1}(Y) \to \tau_{n+1}(Y)$ for $i=k,l$. Denote by $\alpha^{{\bf a}}$ and $\beta^{{\bf b}}$ the corresponding images of $\alpha$ and $\beta$ in $\tau_{n+1}(Y)$.

\begin{proposition}\label{gen-Fox-W-product}{\em
\begin{enumerate}
\item[(1)] If ${\bf a} \cap {\bf b}=\emptyset$ then $(\alpha^{{\bf a}},\beta^{{\bf b}})=(-1)^{w+(|{\bf a}|-1)}[\alpha, \beta]^{{\bf a}\cup {\bf b}}$.\\
\item[(2)] If ${\bf a} \cap {\bf b}\ne \emptyset$ then $(\alpha^{{\bf a}},\beta^{{\bf b}})=1$.
\end{enumerate}
Here $(x,y)$ denotes the commutator $xyx^{-1}y^{-1}$ of $x$ and $y$ and $w=\Sigma_{i\in {\bf a}, j\in {\bf b}} w_{i,j}$, where $w_{i,j}=1$ if $j<i$ and $w_{i,j}=0$ otherwise.}
\end{proposition}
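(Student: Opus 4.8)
The plan is to derive both statements from the semi-direct product structure of $\tau_{n+1}(Y)$ recalled above, together with the symmetry of the torus under permutation of coordinates, anchoring everything on the single conjugation identity $\hat\alpha\bullet\hat\beta=\widehat{[\alpha,\beta]}\,\hat\beta$. First I would record the geometric picture. Under $\tau_{n+1}(Y)\cong[F_{n+1}(\S^1),Y]$ and the James splitting $\Sigma\T^{n}\simeq\bigvee_{\emptyset\neq S\subseteq\{1,\dots,n\}}\S^{|S|+1}$, the embedded class $\alpha^{\mathbf a}$ is precisely $\alpha$ pushed onto the wedge summand $\S^{|\mathbf a|+1}$ indexed by $\mathbf a$ (the suspension of the smash of the circle coordinates in $\mathbf a$, taken in increasing order) and trivial on every other summand. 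A standard support argument then shows that the commutator $(\alpha^{\mathbf a},\beta^{\mathbf b})$ has all of its components vanishing except possibly the one on the summand indexed by $\mathbf a\cup\mathbf b$, where by dimension and naturality it must be a multiple of a Whitehead product. This reduces part (1) to identifying that multiple, and part (2) to showing the component is null.

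Second, I would settle one reference configuration exactly. Choosing $\mathbf a$ and $\mathbf b$ so that one class lands in the image of the section $\tau_n(Y)\to\tau_{n+1}(Y)$ and the other in the kernel $\tau_n(\Omega Y)$, the action $\bullet$ is literal conjugation, and the identity $\hat\alpha\bullet\hat\beta=\widehat{[\alpha,\beta]}\,\hat\beta$ rewrites directly as $(\alpha^{\mathbf a},\beta^{\mathbf b})=\pm[\alpha,\beta]^{\mathbf a\cup\mathbf b}$. Comparing the commutator produced this way with the standard Whitehead product of a class in $\pi_{|\mathbf a|+1}(Y)$ with one in $\pi_{|\mathbf b|+1}(Y)$ pins down the intrinsic sign as $(-1)^{|\mathbf a|-1}$, matching the claimed exponent in this configuration (with the shuffle contribution $w$ accounted for separately).

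Third, for arbitrary disjoint $\mathbf a,\mathbf b$ I would transport the reference case by the automorphism of $\tau_{n+1}(Y)$ induced by a permutation of the $n$ torus coordinates that is monotone on $\mathbf a$ and on $\mathbf b$ and carries them to the reference pair. On the summand indexed by $\mathbf a\cup\mathbf b$ such a permutation acts as a rearrangement of the smashed circle factors, that is, as a self-map of $\S^{|\mathbf a|+|\mathbf b|}$ whose degree is the sign of the shuffle interleaving $\mathbf a$ and $\mathbf b$ into increasing order; this degree is exactly $(-1)^{w}$ with $w=\sum_{i\in\mathbf a,j\in\mathbf b}w_{i,j}$. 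Verifying that both sides of the asserted identity acquire this same sign reduces part (1) to the reference configuration and yields the exponent $w+(|\mathbf a|-1)$. For part (2), when $\mathbf a\cap\mathbf b\neq\emptyset$ the cross term would force the repeated coordinate's circle to contribute a factor $\S^1\wedge\S^1$; since $\S^1$ is a suspension its reduced diagonal is nullhomotopic, so that component vanishes and $(\alpha^{\mathbf a},\beta^{\mathbf b})=1$. Alternatively this follows by induction along the Fox filtration, placing both classes in a common commuting subgroup.

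The main obstacle is the sign bookkeeping underlying the second and third steps: one must reconcile three independent conventions — the sign in the definition of the Whitehead product $[\alpha,\beta]$ for $\alpha\in\pi_{|\mathbf a|+1}(Y)$, the normalization implicit in $\hat\alpha\bullet\hat\beta=\widehat{[\alpha,\beta]}\,\hat\beta$, and the graded shuffle sign from permuting smash factors — and check that they assemble into the single exponent $w+(|\mathbf a|-1)$ rather than some complementary parity. A secondary point requiring care is to make the vanishing in part (2) uniform in the overlap $|\mathbf a\cap\mathbf b|$, and to confirm that the monotone permutations used in the third step introduce no unaccounted internal signs on the individual $\mathbf a$- and $\mathbf b$-summands.
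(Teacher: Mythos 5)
First, a contextual point: the paper itself offers no proof of this proposition --- it is Fox's theorem, quoted from \cite{fox}, and Section 2 simply takes it as input in order to define the Fox number and the function $\phi$. So your attempt must be judged on its own merits, and it contains a genuine circularity at exactly the point you call the anchor.

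The identity $\hat\alpha\bullet\hat\beta=\widehat{[\alpha,\beta]}\,\hat\beta$ is introduced in the paper as a \emph{consequence} of Proposition \ref{gen-Fox-W-product} (``It follows from the work of Fox (in particular Proposition \ref{gen-Fox-W-product}) that the action is determined by the Whitehead products''). Your step 2 uses that identity to establish the proposition in the reference configuration, which is circular: from the semidirect-product structure alone one only knows that $(\alpha^{\mathbf a},\beta^{\mathbf b})$ lies in the kernel $\tau_n(\Omega Y)$, not that it is a Whitehead product. Moreover, even granting the identity, it carries no sign whatsoever, so your claim that ``comparing the commutator produced this way with the standard Whitehead product pins down the intrinsic sign as $(-1)^{|\mathbf a|-1}$'' asserts precisely what has to be proved; your closing paragraph flags this sign reconciliation as the main obstacle but never carries it out, and that sign is the entire content of Fox's statement. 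The salvageable parts of your outline are real: the naturality/support argument localizing the commutator to the $\mathbf a\cup\mathbf b$ summand is correct (both classes are pulled back along the co-$H$ map $\Sigma$ of the coordinate projection $\T^n\to\T^{\mathbf a\cup\mathbf b}$, and restriction to $\Sigma\T^{T}$ for proper $T\subset\mathbf a\cup\mathbf b$ kills the commutator); the monotone-permutation transport with shuffle degree $(-1)^{w}$ is correct; and the reduced-diagonal argument for part (2) (the map $\S^1\to\S^1\wedge\S^1$ is null) is sound. What is missing is a non-circular anchor identifying the commutator with a \emph{signed} Whitehead product, for instance the Arkowitz/Barcus--Barratt theorem that for $u\in[\Sigma A,Y]$ and $v\in[\Sigma B,Y]$ the commutator of their pullbacks in $[\Sigma(A\times B),Y]$ equals the generalized Whitehead product $[u,v]$ composed with $\Sigma$ of the smash projection $A\times B\to A\wedge B$, together with an explicit sign convention. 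Only with such an input, applied with $A=\S^{|\mathbf a|}$ and $B=\S^{|\mathbf b|}$, do your steps 1--3 assemble into an actual proof.
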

The number $(-1)^{w+(|{\bf a}|-1)}$ as in Proposition \ref{gen-Fox-W-product}, which is crucial in determining the structure of the torus homotopy groups, depends on the parameters $l$ and $k$ and we shall call this the {\it Fox number} of the partition $\{{\bf a},{\bf b}\}$.

For any integers $l,k$ with $k>l>0$, let
$$
\phi(l,k)=\sum_{{\bf a}, |{\bf a}|=l} (-1)^{w+(|{\bf a}|-1)}.
$$
Moreover, for all $k\ge 0$, we let $\phi(k,k)=(-1)^k$ and $\phi(0,k)=1$. We shall call $\phi$ the {\it Fox function}.

In order to compute $\phi(l,k)$ in terms of a simple algebraic expression, we state our main lemma.
\begin{lemma}\label{main-lemma} The function $\phi$ satisfies the following recurrence relations.
For $k>l>0$,
$$\phi(l,k)=(-1)^{k-l+1}\phi(l-1,k-1)+\phi(l,k-1)$$
and
$$\phi(k,k)=-\phi(k-1,k-1).$$
\end{lemma}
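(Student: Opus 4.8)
The plan is to treat the two recurrences separately, since only the first carries real content. The second one, $\phi(k,k)=-\phi(k-1,k-1)$, is immediate from the closed boundary value: by definition $\phi(k,k)=(-1)^k$ and $\phi(k-1,k-1)=(-1)^{k-1}$, so $\phi(k,k)=(-1)^k=-(-1)^{k-1}=-\phi(k-1,k-1)$. For the first recurrence, with $k>l>0$, I would run a Pascal-type decomposition of the defining sum $\phi(l,k)=\sum_{|{\bf a}|=l}(-1)^{w+(|{\bf a}|-1)}$ according to whether the largest ground-set index $k$ lies in ${\bf a}$ or in its complement ${\bf b}$. This is the natural way to produce a two-term recurrence whose parameters $(l,k)$ both drop by one in one branch and only $k$ drops in the other.

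Concretely, I would first isolate the subsets with $k\notin{\bf a}$: these range exactly over the $l$-subsets of $\{1,\dots,k-1\}$, and because $k$ then sits in ${\bf b}$ while no index exceeds $k$, the weight $w=\sum_{i\in{\bf a},\,j\in{\bf b}}w_{i,j}$ of Proposition \ref{gen-Fox-W-product} receives no contribution from the pair involving $k$ (there is no $i\in{\bf a}$ with $k<i$). Hence each summand is untouched and this branch contributes exactly $\phi(l,k-1)$. For the subsets with $k\in{\bf a}$, deleting $k$ gives a bijection with the $(l-1)$-subsets of $\{1,\dots,k-1\}$; now $k$ is larger than every element of ${\bf b}$, so it creates precisely $|{\bf b}|=k-l$ new inversions, giving $w=w'+(k-l)$ for the reduced weight $w'$. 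Feeding $w=w'+(k-l)$ and the drop $|{\bf a}|\mapsto |{\bf a}|-1$ into the exponent $w+(|{\bf a}|-1)$ should convert this branch into $(-1)^{k-l+1}\phi(l-1,k-1)$, and summing the two branches yields $\phi(l,k)=(-1)^{k-l+1}\phi(l-1,k-1)+\phi(l,k-1)$.

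The step I expect to be the main obstacle is exactly this sign bookkeeping: I must reconcile the $+(k-l)$ jump in the inversion count $w$ (dictated by the Fox weights $w_{i,j}$) with the parity shift coming from the $|{\bf a}|-1$ in the exponent, so that the two effects assemble into the single clean factor $(-1)^{k-l+1}$ and nothing else. A closely related point, which I would verify with care rather than gloss over, is the behaviour at the extreme indices $l=1$ and $l=k-1$: there the reduced terms are the boundary cases $\phi(0,k-1)$ and $\phi(k-1,k-1)$, whose values are fixed by the separate conventions $\phi(0,k-1)=1$ and $\phi(k-1,k-1)=(-1)^{k-1}$. I would check that these stipulated values are exactly the ones the defining sum assigns under the established sign convention, so that the largest-index decomposition applies verbatim at the edges and no special boundary argument—nor any compensating sign—is needed to close the recurrence.
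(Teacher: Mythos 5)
Your proposal follows exactly the same route as the paper's own proof: the split of the $l$-subsets of $\{1,\ldots,k\}$ according to whether they contain the top index $k$, the observation that $k\in\mathbf{b}$ contributes nothing to $w$, and the count $w=w'+(k-l)$ when $k\in\mathbf{a}$, whence the exponent $w+(l-1)=w'+(l-2)+(k-l+1)$ produces the factor $(-1)^{k-l+1}$. That interior bookkeeping is correct, and the second recurrence is indeed immediate from the stipulation $\phi(k,k)=(-1)^k$, exactly as the paper argues.

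The problem is the boundary check you defer to the end: it does not come out the way you predict, and this is a genuine gap --- one the paper's proof shares, since it identifies the two partial sums with $\phi(l,k-1)$ and $(-1)^{k-l+1}\phi(l-1,k-1)$ without ever examining the edge indices. Extending the defining sum to $\mathbf{a}=\emptyset$ gives the single term $(-1)^{0+(0-1)}=-1$, not the stipulated $\phi(0,k)=1$; extending it to $\mathbf{a}=\{1,\ldots,k\}$ gives $(-1)^{k-1}$, not the stipulated $\phi(k,k)=(-1)^k$. Hence at $l=1$ and at $l=k-1$ the two branch identifications are each off by a sign, and the recurrence as stated actually conflicts with the sum definition: for $(l,k)=(1,3)$ the sum gives $\phi(1,3)=1-1+1=+1$, while $(-1)^{3}\phi(0,2)+\phi(1,2)=-1+0=-1$. (At $(l,k)=(1,2)$ both edge discrepancies occur simultaneously and cancel, which is why the smallest case looks fine.) So your instinct about where the danger sits is exactly right, but the conclusion that ``no compensating sign is needed'' is false: one must either replace the stipulations by the sum-consistent values $\phi(0,k)=-1$, $\phi(k,k)=(-1)^{k-1}$ --- note that the paper itself silently uses $\phi(0,2n)=-1$ and $\phi(2,2)=-1$ in the proof of Proposition~\ref{Pascal}, contradicting its own conventions --- or accept that the recurrence of Lemma~\ref{main-lemma} defines a function that differs from the stated combinatorial sum by a global sign.
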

\begin{proof} The formula for $l=k$ follows from the definition of  $\phi(k , k)$.
Now assume that  $k>l>0$.  Let $L$ denote  the family of all subsets of   $\{ 1,2,\ldots,k\}$
with cardinality $l$. Divide this family into two subfamilies, $L_1$ and $L_2$. A subset
belongs to $L_1$ if it contains $k$, otherwise it belongs to $L_2$. Recall that the Fox number for one partition is given by $(-1)^{l-1+w}$. Summing of the Fox numbers over all  partitions $\{{\bf a},{\bf b}\}$ with ${\bf a}$ belonging  to $L_2$ amounts to
computing the Fox function for the pair  $(l,k-1)$  since the element
$k$ does not play a role in the calculation because $k$ is the last element of the set of $k$ elements and it does not belong to any subset of $l$ elements. Thus we conclude that the Fox function restricted to those partitions, where ${\bf a}$ belongs to $L_2$ coincides with $\phi(l,k-1)$.

For the  sum of the Fox numbers over all  partitions with ${\bf a}$ belonging to $L_1$, since $k$ belongs to the subset ${\bf a}$ the number $w$ contains a summand $k-l$ independent of the subset in $L_1$ since $\sum_j w_{k,j}=\sum_{j\in {\bf b}}1=k-l$. Then the remaining part of $w$ is obtained by considering subsets of $l-1$ elements in a set of cardinality $k-1$. So this coincides with
the calculation of $\phi(l-1,k-1)$ except that the computation for $\phi(l-1,k-1)$ uses subsets of length
$l-1$ and the one for elements of $L_1$ uses subsets of length $l$.  Thus we conclude that the Fox number  restricted
to those partitions with ${\bf a}$ belonging to $L_1$ coincides with $  (-1)^{k-l+1}\phi(l-1,k-1)$ and the result follows.
\end{proof}

Our function $\phi$ by  definition satisfies $\phi(0,k)=1$ and certainly satisfies the equality  $\phi(0,2k+1)=\phi(0,2k)$.  It is not difficult to see from
the definition of $\phi$ that $\phi(1,2k)=0$ and

$\phi(1,2k+1)=-\phi(0,2k)=-1$.

The following three propositions give the basic properties in order to compute  $\phi$.

\begin{proposition}\label{odd-even}  {\em \mbox{\em (1)} For  $l$ odd and $k>l/2$ we have  $$\phi(l,2k)=0.$$

\mbox{\em (2)} For  $l$ even  and  $k\geq l/2$ we have  $$\phi(l,2k)=\phi(l,2k+1).$$
\mbox{\em (3)} For  $l$ even  and $k\geq l/2$ we have $$ \phi(l+1,2k+1)=-\phi(l,2k+1).$$}
\end{proposition}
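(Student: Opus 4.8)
The plan is to prove all three assertions simultaneously by induction on $l$, using the two recurrences of Lemma~\ref{main-lemma} together with the elementary identities $\phi(0,k)=1$, $\phi(1,2k)=0$ and $\phi(1,2k+1)=-1$ recorded just above the statement. These identities dispose of the base cases: $l=0$ gives (2) and (3) for $l=0$ (since $\phi(0,2k)=\phi(0,2k+1)=1$ and $\phi(1,2k+1)=-1=-\phi(0,2k+1)$), while $l=1$ gives (1) for $l=1$. The whole computation is governed by the parity of the exponent $k-l+1$ in the first recurrence: when one peels a single index off the second slot, that exponent is even (sign $+1$) in the two situations feeding (1) and (2), and odd (sign $-1$) in the situation feeding (3), and it is exactly this discrepancy that produces the cancellations below.

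In the inductive step I would first settle the two ``linear'' assertions. For (1), with $l$ odd and $k>l/2$, apply the recurrence at $(l,2k)$; since $2k-l+1$ is even the sign is $+1$, so $\phi(l,2k)=\phi(l-1,2k-1)+\phi(l,2k-1)$. Writing $\phi(l,2k-1)=\phi(l,2(k-1)+1)$ and invoking assertion (3) at the even index $l-1$ (valid since $k-1\ge(l-1)/2$) rewrites the second summand as $-\phi(l-1,2k-1)$, and the two terms cancel to give $\phi(l,2k)=0$. For (2), with $l$ even and $k\ge l/2$, apply the recurrence at $(l,2k+1)$; here $2k-l+2$ is even, the sign is again $+1$, and $\phi(l,2k+1)=\phi(l-1,2k)+\phi(l,2k)$. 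Assertion (1) at the odd index $l-1$ (valid since $k>(l-1)/2$) kills the first summand, yielding $\phi(l,2k)=\phi(l,2k+1)$. Both steps use only the inductive hypothesis at index $l-1$.

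Assertion (3) is the crux. Applying the recurrence to $\phi(l+1,2k+1)$ gives $\phi(l+1,2k+1)=-\phi(l,2k)+\phi(l+1,2k)$, the sign now being $-1$ because $2k-l+1$ is odd. Using assertion (2), already proved for this $l$, to replace $-\phi(l,2k)$ by $-\phi(l,2k+1)$, everything reduces to showing $\phi(l+1,2k)=0$. Naively this is assertion (1) at the \emph{larger} odd index $l+1$, which would break an induction on $l$; the resolution is to run a secondary induction on $k$ and re-derive the vanishing inline rather than quote it. Indeed the recurrence gives $\phi(l+1,2k)=\phi(l,2k-1)+\phi(l+1,2k-1)$ (sign $+1$), and assertion (3) at $(l,k-1)$—the inner hypothesis—supplies $\phi(l+1,2k-1)=-\phi(l,2k-1)$, whence $\phi(l+1,2k)=0$. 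The base of this inner induction, $k=l/2$, is handled by hand: there $\phi(l+1,2k+1)=\phi(l+1,l+1)=(-1)^{l+1}=-1$, while $\phi(l,l+1)=\phi(l-1,l)+\phi(l,l)=0+1=1$ by assertion (1) at the odd index $l-1$ (which forces $\phi(l-1,l)=0$); hence $\phi(l+1,l+1)=-\phi(l,l+1)$, as needed.

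The step I expect to be the only real obstacle is this apparent circularity in (3), where the recurrence seems to demand assertion (1) one index higher; the secondary induction on $k$, together with the explicit boundary value $\phi(l,l+1)=1$, is what makes the argument non-circular. Beyond that, I would take care only with bookkeeping: every sign reduces to the parity of $k-l+1$, and one must check at the boundary $2k=l$ that the strict inequality $k>l/2$ needed for (1) and the non-strict $k\ge l/2$ in (2) and (3) line up so that each invoked instance of (1), (2) or (3) lies in its stated range.
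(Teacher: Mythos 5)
Your proof is correct and takes essentially the same route as the paper's: a simultaneous induction on $l$ driven by the recurrences of Lemma~\ref{main-lemma}, with the apparent circularity between (1) and (3) broken by a secondary induction on $k$ --- precisely the paper's device, which proves $\phi(2s+3,2k)=0$ and $\phi(2s+3,2k-1)=-\phi(2s+2,2k-1)$ together by an inner induction on $k$ starting from the boundary case $2k=l$. The only differences are organizational: the paper advances the outer induction in steps of two (from $m=2s+1$ to $m=2s+3$), bundling (1) at the odd index with (3) at the even index just below it, whereas you advance one index at a time and re-derive the vanishing $\phi(l+1,2k)=0$ inline, and you quote the identities $\phi(1,2k)=0$, $\phi(1,2k+1)=-1$ (stated in the paper just before the proposition) as base cases where the paper re-proves them by its $m=1$ induction on $k$.
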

\begin{proof} The proof is by induction. We say that
the inductive hypothesis holds for an integer $m$ if (1) holds for all $l\leq m$ with $l$ odd, and (2) and (3) hold for all $l\leq m$ with $l$ even.

First, we show that the inductive hypothesis holds for  $m=1$. Part (2) follows from the definition of $\phi$ where both sides of the equation  are $1$.
For the parts ($1$) and ($3$) we use the following equations
\begin{equation}\label{formulaI}
\phi(1,2k+1)=(-1)^{2k+1}\phi(0,2k)+\phi(1,2k)
\end{equation}
and
\begin{equation}\label{formulaII}
\phi(1,2k)=(-1)^{2k}\phi(0,2k-1)+\phi(1,2k-1)
\end{equation}
obtained  from Lemma \ref{main-lemma}.
\par By induction on $k$, we prove simultaneously that   $\phi(1,2k)=0$,
and  $\phi(1,2k-1)=0-1=-1$, where the latter equality is equivalent to part (3) for $l=0$.
By definition  $\phi(1,1)=-1$ and by formula \eqref{formulaII} above
$\phi(1,2)=(-1)^{2}\phi(0,1)+\phi(1,1)=1-1=0$. So the result holds for $k=1$.
Suppose that  $\phi(1,2r)=0$, $\phi(1,2r-1)=-1$, for $r\leq k$.
Next we prove that $\phi(1,2k+2)=0$ and  $\phi(1,2k+1)=-1$.  By the inductive hypothesis and equation \eqref{formulaI} it follows that
$\phi(1,2k+1)=(-1)^{2k+1}\phi(0,2k)+\phi(1,2k)=-1+0=-1$.  Now the inductive hypothesis and formula \eqref{formulaII} yield
$\phi(1,2k+2)=(-1)^{2k+2}\phi(0,2k+1)+\phi(1,2k+1)=1-1=0$ and the result follows.

Now assume the assertions for parts (1) - (3) hold for $m=2s+1$. Then, we show that the result holds for $m=2s+3$.
The proof is similar to the arguments above.  First, we have
$$\phi(2s+2,2k+1)=\phi(2s+1,2k)+\phi(2s+2,2k)=\phi(2s+2,2k)$$
where the first equality follows from Lemma \ref{main-lemma} and the second equality holds by inductive hypothesis about part (1). So part (2) follows.
For parts ($1$) and ($3$), we use the following equations
\begin{equation}\label{formulaIII}
\phi(2s+3,2k+1)=-\phi(2s+2,2k)+\phi(2s+3,2k),
\end{equation}
\begin{equation}\label{formulaIV}
\phi(2s+3,2k)=\phi(2s+2,2k-1)+\phi(2s+3,2k-1)
\end{equation} and
\begin{equation}\label{formulaV}
\phi(2s+2,2k+1)=\phi(2s+1,2k)+\phi(2s+2,2k)
\end{equation}
obtained  from Lemma \ref{main-lemma}.
\par By induction on $k$, we  prove simultaneously that   $\phi(2s+3,2k)=0$,
and  $\phi(2s+3,2k-1)=-\phi(2s+2,2k-1)$. We have $k\geq s+2$ and take
$k=s+2$.
By definition of $\phi$, we have
$\phi(2s+3,2s+3)=-\phi(2s+2,2s+2)$ and by  equation \eqref{formulaV}, $\phi(2s+2,2s+3)=\phi(2s+1,2s+2)+\phi(2s+2,2s+2)=\phi(2s+2,2s+2)$ where the last equality
follows from the inductive hypothesis.  Therefore $\phi(2s+3,2s+3)=-\phi(2s+2,2s+3)$ and (3) follows. The following equation holds
$$\phi(2s+3,2s+4)=\phi(2s+2,2s+3)+\phi(2s+3,2s+3)=-\phi(2s+3,2s+3)+\phi(2s+3,2s+3)=0,$$
where the first equality follows from equation \eqref{formulaIV}, and the second equality
follows from  part (3). So the  result holds for part  (1).

Now, suppose that the statement holds for $k$ and
let us prove for $k+1$.
From equation \eqref{formulaIII} we have
$\phi(2s+3,2k+1)=-\phi(2s+2,2k)+\phi(2s+3,2k)$. Since $\phi(2s+3,2k)=0$ by inductive hypothesis,
and $\phi(2s+2,2k)=\phi(2s+2, 2k+1)$ it follows that (3) holds. It remains to show that $\phi(2s+3,2k+2)=0$ in order for (1) to hold.
From  \eqref{formulaIV} we have $\phi(2s+3,2k+2)=\phi(2s+2,2k+1)+\phi(2s+3,2k+1)$, and from (3), $\phi(2s+3,2k+1)-\phi(2s+2,2k+1)$, so it follows that
$\phi(2s+3,2k+2)=0$.
Therefore  (1) and (3) hold for $k+1$ and this concludes the proof.
  \end{proof}

  \begin{proposition}\label{rec} {\em The function $\phi$ satisfies the recursive formula
\begin{equation}\label{ggw-formula}
\phi(2l,2k)=\phi(2l ,2k-2)+\phi(2l -2, 2k-2).
\end{equation}}
\end{proposition}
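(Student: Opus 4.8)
The plan is to obtain \eqref{ggw-formula} by applying the recurrence of Lemma~\ref{main-lemma} twice, so as to descend from the second argument $2k$ to $2k-2$, and then to observe a cancellation among the terms carrying an odd first argument. Throughout I would fix the intended range $k\ge l\ge 1$ and keep in mind the boundary conventions $\phi(0,k)=1$, $\phi(k,k)=(-1)^k$, and $\phi(l,k)=0$ whenever $l>k$ (the last being the empty-sum value), since a few intermediate terms will land on these edges.

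First I would apply Lemma~\ref{main-lemma} to $\phi(2l,2k)$. Here the sign exponent is $(2k)-(2l)+1=2k-2l+1$, which is odd, so $(-1)^{2k-2l+1}=-1$ and
$$\phi(2l,2k)=-\phi(2l-1,2k-1)+\phi(2l,2k-1).$$
Next I would apply the same recurrence to each term on the right, now descending from $2k-1$ to $2k-2$. For $\phi(2l-1,2k-1)$ the exponent is $(2k-1)-(2l-1)+1=2k-2l+1$, again odd, giving $\phi(2l-1,2k-1)=-\phi(2l-2,2k-2)+\phi(2l-1,2k-2)$; for $\phi(2l,2k-1)$ the exponent is $(2k-1)-(2l)+1=2k-2l$, which is even, giving $\phi(2l,2k-1)=\phi(2l-1,2k-2)+\phi(2l,2k-2)$.

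Substituting these back and collecting terms, the two contributions involving $\phi(2l-1,2k-2)$ appear with opposite signs and cancel, while the leading $-1$ in front of $\phi(2l-1,2k-1)$ turns $-\phi(2l-2,2k-2)$ into $+\phi(2l-2,2k-2)$. What remains is exactly $\phi(2l,2k)=\phi(2l-2,2k-2)+\phi(2l,2k-2)$, which is \eqref{ggw-formula}. An alternative to the second application is to invoke Proposition~\ref{odd-even} directly: part~(3) rewrites $\phi(2l-1,2k-1)=-\phi(2l-2,2k-1)$, eliminating the odd first argument, and part~(2) then converts both $\phi(2l-2,2k-1)$ and $\phi(2l,2k-1)$ into their values at the even argument $2k-2$.

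I expect the only genuine obstacle to be the sign bookkeeping: the two second-stage applications carry opposite signs precisely because shrinking the first argument by one flips the parity of $k-l+1$, and it is this flip that produces the clean cancellation. A secondary point requiring care is the verification of the boundary cases—most notably $k=l$, where both $\phi(2l,2k-1)$ and $\phi(2l,2k-2)$ vanish by the convention $\phi(l,k)=0$ for $l>k$, and the small case $l=1$, where $\phi(2l-2,2k-2)=\phi(0,2k-2)=1$—so that the recurrence is confirmed to hold at the edges of its range as well.
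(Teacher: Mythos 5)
Your proposal is correct, but your primary route is genuinely different from the paper's. The paper applies Lemma \ref{main-lemma} only once, obtaining $\phi(2l,2k)=-\phi(2l-1,2k-1)+\phi(2l,2k-1)$, and then eliminates both right-hand terms by citing Proposition \ref{odd-even}: part (3) gives $-\phi(2l-1,2k-1)=\phi(2l-2,2k-1)$, and part (2) converts $\phi(2l-2,2k-1)$ and $\phi(2l,2k-1)$ into $\phi(2l-2,2k-2)$ and $\phi(2l,2k-2)$ --- this is precisely the ``alternative'' you sketch in your last step. Your main argument instead applies Lemma \ref{main-lemma} a second time, to each of $\phi(2l-1,2k-1)$ and $\phi(2l,2k-1)$, and uses the parity flip of the exponent $k-l+1$ to cancel the two copies of $\phi(2l-1,2k-2)$. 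This buys self-containedness: Proposition \ref{rec} then follows from the basic recurrence alone, with no appeal to the double induction behind Proposition \ref{odd-even}, at the cost of one extra substitution; the paper's version is shorter but rests on machinery already established. Both derivations are legitimate on the same range $k>l\ge 1$, and note that this strict inequality is the honest scope of \eqref{ggw-formula}: at $k=l$ the right-hand side contains $\phi(2l,2l-2)$, which the paper never defines. Your added convention $\phi(l,k)=0$ for $l>k$ (the empty-sum value) is not part of the paper's setup, but it is harmless and makes the diagonal case hold trivially, so flagging it as an explicit extension, as you do, is the right move.
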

\begin{proof}  From Lemma \ref{main-lemma}, we have
$$\phi(2l,2k)=(-1)^{2k-2l+1}\phi(2l-1,2k-1)+\phi(2l,2k-1)=(-1)\phi(2l-1,2k-1)+\phi(2l,2k-1).$$
From Proposition \ref{odd-even}, we have
$$(-1)\phi(2l-1,2k-1)=\phi(2l-2,2k-2)$$
and
$$\phi(2l,2k-1)=\phi(2l,2k-2).$$
Hence, the result follows.
\end{proof}

Next, we give a simple expression for $\phi(l,k)$ when $l$ and $k$ are even.
\begin{proposition}\label{Pascal}{\em
For any $l,k\ge 1$,
$$
\phi(2l,2k)=-\binom{k}{l}.
$$}
\end{proposition}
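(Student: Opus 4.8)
The plan is to observe that Proposition~\ref{rec} is nothing but Pascal's rule in disguise, and to promote this into the identity $\phi(2l,2k)=-\binom{k}{l}$ by a single induction once the boundary values are matched. Setting $f(l,k):=\phi(2l,2k)$, Proposition~\ref{rec} reads $f(l,k)=f(l,k-1)+f(l-1,k-1)$, which is formally identical to the defining recurrence $\binom{k}{l}=\binom{k-1}{l}+\binom{k-1}{l-1}$ of the binomial coefficients. Since this recurrence is linear and homogeneous, any scalar multiple of $\binom{k}{l}$ again satisfies it, in particular $-\binom{k}{l}$; hence the two sequences must coincide as soon as they agree on enough boundary data to seed the recursion.

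Accordingly, I would first fix the boundary. Along the diagonal I would evaluate $f(l,l)=\phi(2l,2l)$ from the diagonal convention for $\phi$ recorded after Lemma~\ref{main-lemma}, and along the first column I would evaluate $f(1,k)=\phi(2,2k)$ from the explicit values $\phi(1,2k)=0$ and $\phi(1,2k+1)=-1$ together with Lemma~\ref{main-lemma} (or, equivalently, Proposition~\ref{odd-even}); this is the step that pins down the global sign, which must be made to agree with $-\binom{l}{l}=-1$ and $-\binom{k}{1}=-k$ respectively. These two families of base values, together with the degenerate value $\phi(0,2k)$ to which the case $l=1$ of Proposition~\ref{rec} appeals, constitute the full boundary of the region $1\le l\le k$.

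With the boundary in place I would induct on $k$, proving $\phi(2l,2k)=-\binom{k}{l}$ for all $1\le l\le k$ simultaneously. The diagonal term $l=k$ and the column term $l=1$ are the base cases just discussed; for the interior range $2\le l\le k-1$, Proposition~\ref{rec} together with the inductive hypothesis for $k-1$ gives
\[
\phi(2l,2k)=\phi(2l,2k-2)+\phi(2l-2,2k-2)=-\binom{k-1}{l}-\binom{k-1}{l-1}=-\binom{k}{l},
\]
which closes the induction.

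The inductive step itself is immediate once the Pascal structure of Proposition~\ref{rec} is recognized, so the real work—and the main obstacle—lies entirely in the base cases: one must compute the diagonal values $\phi(2l,2l)$ and the column values $\phi(2,2k)$ and check that their signs line up with $-\binom{l}{l}$ and $-\binom{k}{1}$. This is precisely the place where the conventions $\phi(0,k)=1$ and $\phi(k,k)=(-1)^k$ feed into the formula, so I would carry out these boundary computations with particular care, tracking the parity contributions coming from the Fox number $(-1)^{w+(|{\bf a}|-1)}$ directly rather than trusting the recursion to produce the overall sign for free.
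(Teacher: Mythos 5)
Your strategy is correct, but it is a genuinely different route from the paper's. The paper runs no induction at all: it sets $c(n,k):=\phi(2k,2n)$, observes that Proposition \ref{rec} is exactly the two-variable recurrence $c(n+1,k)=c(n,k)+c(n,k-1)$ treated by H.\ Gupta, quotes Gupta's closed-form solution in terms of the boundary sequences $a_n=c(n,0)$ and $b_k=c(1,k)$, substitutes $a_n=\phi(0,2n)=-1$, $b_1=\phi(2,2)=-1$ and $b_k=0$ for $k\ge 2$, and finally collapses the resulting expression $-\sum_{r=l}^{k}\binom{r-1}{l-1}$ to $-\binom{k}{l}$ via the hockey-stick identity. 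Your induction buys self-containedness (no external citation, and nothing beyond Pascal's rule in the inductive step), at the cost of verifying two boundary families (the diagonal $\phi(2l,2l)$ and the column $\phi(2,2k)$) instead of the row-and-column data that Gupta's formula consumes; both arguments reduce the proposition to Proposition \ref{rec} plus boundary values, so they are equivalent in substance.

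Your insistence on computing the boundary directly from the Fox numbers, rather than trusting the stated conventions, is not just prudence\,---\,it is essential, and one shortcut you float in passing would actually fail. Feeding the displayed values $\phi(1,2k)=0$, $\phi(1,2k+1)=-1$ into Lemma \ref{main-lemma} yields $\phi(2,2k)=\phi(2,2k-2)+1$, an increasing sequence that can never equal $-k$; likewise the convention $\phi(k,k)=(-1)^k$ gives $\phi(2l,2l)=+1$ rather than the required $-\binom{l}{l}=-1$. The values consistent with the definition of $\phi$ as a sum of the Fox numbers $(-1)^{w+(|\mathbf{a}|-1)}$ are $\phi(2l,2l)=(-1)^{2l-1}=-1$ and $\phi(1,2k+1)=+1$, which via Lemma \ref{main-lemma} give $\phi(2,2k)=\phi(2,2k-2)-1=-k$, exactly matching $-\binom{k}{1}$. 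Note that these are also the values the paper itself tacitly substitutes into Gupta's formula ($a_n=-1$, $b_1=-1$), in conflict with its own stated conventions $\phi(0,k)=1$ and $\phi(k,k)=(-1)^k$. So carry out the two base computations exactly as you propose, from the subset sums themselves; with those in hand, your inductive step closes the proof correctly.
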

\begin{proof}
Let $c(n,k)$ be a function of two integer variables such that
\begin{equation}\label{2-var}
c(n+1,k)=c(n,k)+c(n,k-1)
\end{equation}
for $n,k\ge 1$. Suppose that $c(n,0)=a_n$ and $c(1,k)=b_k$, where $\{a_n\}_{n\ge 1}$ and $\{b_k\}_{k\ge 1}$ are two arbitrary sequences. Then H. Gupta \cite{gupta} showed that
\begin{equation}\label{gupta}
c(n,k)=\sum_{r=k}^{n-1} \binom{r-1}{k-1} a_{n-r} + \sum_{r=0}^{k-1} \binom{n-1}{r} b_{k-r}.
\end{equation}
Now, if we let $c(n,k)=\phi(2k,2n)$ then \eqref{ggw-formula} shows that \eqref{2-var} holds. Moreover, $a_n=c(n,0)=\phi(0,2n)=-1$ for all $n\ge 0$
and $b_k=c(1,k)=\phi(2k,2)$. It follows that $b_1=\phi(2,2)=-1$ and $b_k=0$ for all $k\ge 2$. Now, the formula \eqref{gupta} becomes
\begin{equation*}
\phi(2l , 2k)=\sum_{r=l}^{k-1} \binom{r-1}{l-1} a_{k-r} + \sum_{r=0}^{l-1} \binom{k-1}{r} b_{l-r}.
\end{equation*}
Since $b_k=0$ for $k\ge 2$, it follows that
\begin{equation}\label{ggw-recurrence}
\begin{aligned}
\phi(2l , 2k)&=\sum_{r=l}^{k-1} \binom{r-1}{l-1} a_{k-r} + \binom{k-1}{l -1} b_{1} \\
                &=\sum_{r=l}^{k-1} \binom{r-1}{l-1} (-1) + \binom{k-1}{l -1} (-1) \\
                &=-\sum_{r=l}^{k} \binom{r-1}{l-1}.
\end{aligned}
\end{equation}
The following equality
\begin{equation}\label{pascal}
\binom{0}{k} + \binom{1}{k} + \cdots + \binom{n}{k}=\binom{n+1}{k+1}
\end{equation}
can be derived using the Pascal triangle and the general form of the binomial coefficient $\binom{n}{k}$.
Now,
\begin{equation*}
\begin{aligned}
\sum_{r=l}^{k} \binom{r-1}{l-1}&=\binom{l -1}{l -1} + \cdots +\binom{k-1}{l -1}\\
                                     &=\left[\binom{0}{l -1} + \cdots +\binom{l -2}{l -1} + \binom{l -1}{l -1} + \cdots +\binom{k-1}{l -1}\right] - \left[\binom{0}{l -1} + \cdots +\binom{l -2}{l -1}\right] \\
                                     &=\binom{k}{l} -\binom{l -1}{l} \quad \text{by \eqref{pascal}} \\
                                     &=\binom{k}{l} -0 = \binom{k}{l}.
\end{aligned}
\end{equation*}
Hence, we have
\begin{equation}\label{simple-phi}
\phi(2l, 2k)=-\binom{k}{l}
\end{equation}
and the proof is complete.
\end{proof}

Lemma \ref{main-lemma} and all propositions in this section yield its main result.

\begin{theorem}\label{main-phi}
For any integers $l, k$ with $1\le l \le k$, we have
$$
\phi(l,k) \quad = \quad
\left\{
\aligned
& -\binom{\frac{k}{2}}{\frac{l}{2}}, \qquad & \text{if $l$ is even and $k$ is even;} \\
& 0, \qquad & \text{if $l$ is odd and $k$ is even;} \\
& \binom{\frac{k-1}{2}}{\frac{l-1}{2}}, \qquad & \text{if $l$ is odd and $k$ is odd;} \\
& -\binom{\frac{k-1}{2}}{\frac{l}{2}}, \qquad & \text{if $l$ is even and $k$ is odd.}
\endaligned
\right.
$$
\end{theorem}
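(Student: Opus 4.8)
The plan is to deduce all four entries from the single even--even computation already carried out in Proposition \ref{Pascal}, so that no new induction is required: the odd--even entry is a direct citation, while the two mixed--parity entries and the odd--odd entry are obtained from the even--even value by one or two applications of Proposition \ref{odd-even}. Throughout I would keep the standing hypothesis $1\le l\le k$ and simply track parities, so that each reduction is a matter of rewriting $\phi(l,k)$ until its arguments are both even.

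First I would dispose of the two easy classes. If $l$ and $k$ are both even, say $l=2L$ and $k=2K$ with $L,K\ge 1$, then Proposition \ref{Pascal} gives immediately $\phi(l,k)=\phi(2L,2K)=-\binom{K}{L}=-\binom{k/2}{l/2}$. If $l$ is odd and $k$ is even, then $l<k$ (distinct parities forbid equality), hence $k/2>l/2$ and Proposition \ref{odd-even}(1) applies verbatim to give $\phi(l,k)=0$.

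Next I would treat the mixed class with $l$ even and $k$ odd. Writing $k=2K+1$, the inequality $l\le k$ forces $K\ge l/2$, so Proposition \ref{odd-even}(2) yields $\phi(l,2K+1)=\phi(l,2K)$, and Proposition \ref{Pascal} then gives $\phi(l,k)=-\binom{K}{l/2}=-\binom{(k-1)/2}{l/2}$. For the odd--odd class, writing $l=2L+1$ and $k=2K+1$, I would first apply Proposition \ref{odd-even}(3) to trade the odd first argument for the even one below it, $\phi(2L+1,2K+1)=-\phi(2L,2K+1)$; then Proposition \ref{odd-even}(2) to lower the second argument, $\phi(2L,2K+1)=\phi(2L,2K)$; and finally Proposition \ref{Pascal}, obtaining $\phi(l,k)=-\bigl(-\binom{K}{L}\bigr)=\binom{K}{L}=\binom{(k-1)/2}{(l-1)/2}$. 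At each step the range hypotheses $k\ge l/2$ (resp.\ $k>l/2$) of Proposition \ref{odd-even} must be verified, but they all reduce to the standing inequality $l\le k$.

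The step I expect to be the main obstacle is the boundary of the odd--odd reduction at $l=1$, i.e.\ $L=0$. There the chain of reductions bottoms out at $\phi(0,\cdot)$, a value lying outside the range $l\ge 1$ covered by Proposition \ref{Pascal}, so the last equality cannot be read off from that proposition and must instead be matched directly against the defining normalization of $\phi(0,\cdot)$ together with the base evaluations $\phi(1,2k)=0$ and $\phi(1,2k+1)$ recorded immediately after Lemma \ref{main-lemma}. Reconciling the normalization at $l=0$ with the closed form at $l=1$ is the one place where care is genuinely needed; once that single boundary value is pinned down, the four-way case analysis above completes the proof.
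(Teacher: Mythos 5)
Your reduction scheme is exactly the one the paper intends: the paper's entire proof of Theorem \ref{main-phi} is the single sentence that Lemma \ref{main-lemma} and the propositions of Section 2 yield the result, and your four-way parity analysis --- Proposition \ref{Pascal} for even--even, Proposition \ref{odd-even}(1) for odd--even, Proposition \ref{odd-even}(2) plus Proposition \ref{Pascal} for even--odd, and Proposition \ref{odd-even}(3), then (2), then Proposition \ref{Pascal} for odd--odd --- is precisely the derivation being alluded to. Your range checks (each hypothesis $K\ge l/2$ or $K>l/2$ reducing to the standing inequality $l\le k$) are correct, so the even--even, odd--even, even--odd cases and the odd--odd cases with $l\ge 3$ are all in order.

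The boundary case you flag, $l=1$ with $k$ odd (including $l=k=1$), is however not merely delicate: resolved in the way you propose, it would sink the proof rather than finish it. Your chain gives $\phi(1,2K+1)=-\phi(0,2K+1)=-\phi(0,2K)$, and the normalization actually printed in the paper is $\phi(0,k)=1$, with the recorded base value $\phi(1,2k+1)=-\phi(0,2k)=-1$; matching against these, as you suggest, yields $\phi(1,k)=-1$, contradicting the theorem's entry $\binom{(k-1)/2}{0}=+1$. What is really going on is a sign inconsistency in the paper itself: the printed conventions $\phi(0,k)=1$ and $\phi(k,k)=(-1)^k$ are incompatible with the combinatorial sum defining $\phi$. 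Computing directly from that definition, a singleton ${\bf a}=\{i\}\subset\{1,\dots,k\}$ has $w=i-1$ and contributes $(-1)^{i-1}$, so $\phi(1,k)=\sum_{i=1}^{k}(-1)^{i-1}=1$ for $k$ odd; likewise the sum forces $\phi(k,k)=(-1)^{k-1}$, and validity of Lemma \ref{main-lemma} at $l=1$ forces $\phi(0,k)=-1$. These corrected values are the ones the paper silently uses inside the proof of Proposition \ref{Pascal} (where it sets $a_n=\phi(0,2n)=-1$ and $b_1=\phi(2,2)=-1$), and they are the ones under which Theorem \ref{main-phi} is true. So to close your one open case you should not reconcile with the printed $l=0$ normalization; you should recompute $\phi(1,k)$ (or equivalently $\phi(0,k)$ and $\phi(k,k)$) from the sum definition, note the sign error in the printed conventions, and then your chain closes as $\phi(1,2K+1)=-\phi(0,2K)=+1=\binom{K}{0}$. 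The same issue silently affects the diagonal $l=k$; you avoid it there only because Proposition \ref{Pascal}, stated for all $l,k\ge 1$, already embodies the corrected convention.
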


\section{Group structure of $[J_n(\mathbb S^1),\Omega Y]$}
The group structure of $[J_n(\S^1),\Omega Y]$ is determined by the suspension co-$H$-structure on $\Sigma J_n(\S^1)$.
But, in view of \cite{ggw5}, the suspension $p_n : F_{n+1}(\S^1)\to \Sigma J_n(\S^1)$ of the projection map $\T^n\sqcup *\to J_n(\S^1)$
leads to a monomorphism of groups $$[\Sigma J_n(\S^1),Y] \hookrightarrow [F_{n+1}(\S^1),Y]=\tau_{n+1}(Y)$$ for any pointed space $Y$.
Thus, the group structure of $[\Sigma J_n(\S^1),Y]$ is detrmined by the multiplication of  $[F_{n+1}(\S^1),Y]$.
\par To relate those structures, first notice that the cofibration
$$\S^1\stackrel{j}{\to} \mathcal P_n(\S^1):=\T^n/\T^{n-1}\stackrel{q}{\to} \S^1\wedge \T^{n-1}$$
has a retraction $p : \mathcal P_n(\S^1)\to \S^1$.
Hence, the map $$\Sigma p+\Sigma q :\Sigma \mathcal P_n(\S^1)\stackrel{\simeq}{\longrightarrow} \Sigma \S^1\vee \S^1\wedge \Sigma\T^{n-1}$$
is a homotopy equivalence for $n\ge 1$. Because $\Sigma(X_1\times X_2)\simeq \Sigma X_1\vee \Sigma X_2\vee \Sigma(X_1\wedge X_2)$
for any pointed spaces $X_1$ and $X_2$, by an inductive argument, we derive:
\begin{equation}\label{eq1}
\begin{array}{l}
\Sigma \mathcal P_n(\S^1)\simeq \bigvee_{k=1}^n\binom{n-1}{k-1}\S^{k+1},\\
\vspace{3mm}
F_n(\S^1)\simeq \Sigma\T^{n-1}\vee \S^1 \simeq\bigvee_{k=0}^{n-1}\binom{n-1}{k}\S^{k+1}
\end{array}
\end{equation}
for $n\ge 1$.
Further, recall from \cite{J} that
\begin{equation}\label{eq2}
\Sigma J_n(\S^1)\simeq\bigvee_{k=1}^n\S^{k+1}
\end{equation}
for $n\ge 1$ and notice that (up to homotopy equivalences above) the suspension map $$p_n : F_{n+1}(\S^1)\longrightarrow \Sigma J_n(\S^1)$$
restricts to ${p_n}|_{\S^1}=\ast$ and $p_n|_{\S^{k+1}} : \S^{k+1} \to \Sigma J_n(\S^1)$ to the inclusion map for $k=1,\ldots,n$.
\par In view of \cite[Theorem 3.1]{ggw1}, the suspension co-$H$-structure $$\hat{\mu}_n : F_n(\S^1)\to F_n(\S^1)\vee F_n(\S^1)$$ on $F_n(\S^1)$ leads to
the following split exact sequence
$$
1\to [\Sigma {\mathcal P}_{n-1}(\S^1),Y] \to [F_n(\S^1),Y]\stackrel{\dashleftarrow}{\to} [F_{n-1}(\S^1),Y] \to 1
$$
for any pointed space $Y$.

Hence, $[F_n(\S^1),Y]\cong [\Sigma \mathcal P_{n-1}(\S^1),Y]\rtimes[F_{n-1}(\S^1),Y]$ is
the semi-direct product with respect to the natural action
$$[F_{n-1}(\S^1),Y]\times[\Sigma \mathcal P_{n-1}(\S^1),Y]\to[\Sigma \mathcal P_{n-1}(\S^1),Y].$$
In particular, for $Y=F_{n-1}(\S^1)\vee\Sigma \mathcal P_{n-1}(\S^1)$, by the natural bijection
$[F_{n-1}(\S^1),Y]\times[\Sigma \mathcal P_{n-1}(\S^1),Y]\cong[F_{n-1}(\S^1)\vee\Sigma \mathcal P_{n-1}(\S^1),Y]$,
the identity map $\mbox{id}_Y$ is sent to the corresponding co-action
$$\alpha_{n-1} : \Sigma \mathcal P_{n-1}(\S^1)\to F_{n-1}(\S^1)\vee \Sigma \mathcal P_{n-1}(\S^1).$$
Furthermore, the natural bijection $[\Sigma \mathcal P_{n-1}(\S^1),Y]\times[F_{n-1}(\S^1),Y]\cong
[F_n(\S^1),Y]$ for any pointed space $Y$ yields a homotopy equivalence
$$F_n(\S^1)\simeq F_{n-1}(\S^1)\vee\Sigma \mathcal P_{n-1}(\S^1).$$
\par By means of \cite[Theorem 2.2]{ggw4}, the suspension co-$H$-structure
$$\hat{\mu}_n : F_n(\S^1)\longrightarrow F_n(\S^1)\vee F_n(\S^1)$$
is described inductively and determined by
$$\hat{\mu}^1_n : F_{n-1}(\S^1)\stackrel{\hat{\mu}_{n-1}}{\to}F_{n-1}(\S^1)\vee F_{n-1}(\S^1)\hookrightarrow F_n(\S^1)\vee F_n(\S^1)$$
and a map
$$\hat{\mu}_n^2: \Sigma \mathcal P_{n-1}(\S^1)\to F_n(\S^1)\vee F_n(\S^1)$$
defined via the co-action $\alpha_{n-1} : \Sigma \mathcal P_{n-1}(\S^1)\to F_{n-1}(\S^1)\vee \Sigma \mathcal P_{n-1}(\S^1)$.

\par Similarly to $F_{n+1}(\S^1)$, the suspension co-$H$-structure
$$\overline{\mu}_n: \Sigma J_n(\S^1)\to \Sigma J_n(\S^1) \vee \Sigma J_n(\S^1)$$
is also described inductively.

Because $\Sigma J_1(\S^1)=F_2(\S^1)$, the co-$H$-structure
$\overline{\mu}_1=\hat{\mu}_2 : \Sigma J_1(\S^1)\to \Sigma J_1(\S^1)\vee \Sigma J_1(\S^1)$.
Given the co-$H$-structure $\overline \mu_n : \Sigma J_n(\S^1)
\to \Sigma J_n(\S^1) \vee \Sigma J_n(\S^1)$
write $\Sigma J_{n+1}(\S^1) \simeq \Sigma J_n(\S^1) \vee \S^{n+2}$ and
$F_{n+2}(\S^1) \simeq F_{n+1}(\S^1)
\vee \Sigma \mathcal P_{n+1}(\S^1)$. Then, one can easily verify that the composite maps
$$\overline{\mu}_{n+1}^1 :
\Sigma J_n(\S^1) \stackrel{\overline
{\mu}_n}{\longrightarrow} \Sigma J_n(\S^1) \vee
\Sigma J_n(\S^1) \hookrightarrow \Sigma J_{n+1}(\S^1) \vee
\Sigma J_{n+1}(\S^1)$$
and $$\overline{\mu}_{n+1}^2 :
\S^{n+2} \hookrightarrow \Sigma \mathcal P_{n+1}(\S^1)
\stackrel{\hat{\mu}_{n+2}^2}{\longrightarrow}F_{n+2}(\S^1)\vee F_{n+2}(\S^1)\stackrel{p_{n+1}\vee
p_{n+1}}{\longrightarrow} \Sigma J_{n+1}(\S^1) \vee \Sigma J_{n+1}(\S^1)$$
lead to the suspension co-$H$-structure $$\overline \mu_{n+1}=\overline{\mu}_{n+1}^1\vee\overline{\mu}_{n+1}^2 : \Sigma J_{n+1}(\S^1)
\longrightarrow \Sigma J_{n+1}(\S^1) \vee \Sigma J_{n+1}(\S^1)$$
on the space $\Sigma J_{n+1}(\S^1)\simeq\Sigma J_n(\S^1)\vee \S^{n+2}$.

\bigskip

\par To analyze the suspension co-$H$-structure on $\Sigma J_n(\mathbb S^1)$, we recall the recent work \cite{AL2} on co-$H$-structures
on a wedge of spheres $\S=\bigvee_{i=1}^t\mathbb{S}^{n_i}$. Write
$k_i :\mathbb{S}^{n_i}\hookrightarrow \S$ for the inclusion maps with $i=1,\ldots,t$.
Further, set $\iota_j :\S\hookrightarrow \S\vee \S$ for the inclusion, and
$p_j : \S\vee \S\to \S$ for the projection maps with $j=1,2$. In \cite{AL2}, Arkowitz and Lee have proved the following result.
\begin{proposition}\mbox{$($\cite[Lemma 3.3]{AL2}$)$}\label{n-spheres}{\em
Let $\varphi:\S\to \S\vee \S$ be a co-action. Then, $\varphi$ is a co-$H$-structure on $\S$
if and only if $\varphi k_i=\iota_1k_i+\iota_2k_i+P_i$, where
$P_i : \mathbb{S}^{n_i}\to \S\vee \S$ has the property $p_1P_i=0=p_2P_i$ for $i=1,\ldots,t$.}
\end{proposition}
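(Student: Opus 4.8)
The plan is to translate the two defining conditions of a co-$H$-structure---that $p_1\varphi\simeq\mathrm{id}_{\S}$ and $p_2\varphi\simeq\mathrm{id}_{\S}$---into pointwise conditions on the wedge summands and then read them off from the asserted formula. Since the given co-action $\varphi$ is in particular a based map $\S\to\S\vee\S$, the first structural fact I would use is that $\S=\bigvee_{i=1}^{t}\mathbb{S}^{n_i}$ is a wedge, hence a coproduct in the pointed homotopy category: restriction along the inclusions $k_i$ yields a natural bijection $[\S,Z]\cong\prod_{i=1}^{t}[\mathbb{S}^{n_i},Z]=\prod_{i=1}^{t}\pi_{n_i}(Z)$ for every pointed space $Z$. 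Thus a based map out of $\S$ is determined up to homotopy by its composites with the $k_i$, and $p_j\varphi\simeq\mathrm{id}_{\S}$ holds if and only if $p_j\varphi k_i\simeq k_i$ for all $i=1,\dots,t$ and $j=1,2$. This is the reduction I would carry out first.

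Next I would work inside the group $[\mathbb{S}^{n_i},\S\vee\S]=\pi_{n_i}(\S\vee\S)$. Its addition is induced by the pinch (cogroup) structure on the source sphere, so post-composition with any based map is additive; in particular each $(p_j)_{*}$ is a homomorphism. Because $\pi_{n_i}(\S\vee\S)$ is a group, the equation $\varphi k_i=\iota_1 k_i+\iota_2 k_i+P_i$ is not an existence hypothesis but simply the definition $P_i:=\varphi k_i-\iota_1 k_i-\iota_2 k_i$ (an abelian difference when $n_i\ge 2$; for $n_i=1$ one subtracts on the appropriate side). Using the elementary identities $p_1\iota_1\simeq\mathrm{id}_{\S}$, $p_1\iota_2\simeq\ast$, $p_2\iota_1\simeq\ast$ and $p_2\iota_2\simeq\mathrm{id}_{\S}$, I would then compute
$$p_1\varphi k_i=(p_1)_{*}(\iota_1 k_i)+(p_1)_{*}(\iota_2 k_i)+(p_1)_{*}(P_i)=k_i+0+p_1P_i,$$
and symmetrically $p_2\varphi k_i=k_i+p_2P_i$.

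Combining the two steps yields the equivalence at once: $\varphi$ is a co-$H$-structure if and only if $p_j\varphi k_i\simeq k_i$ for all $i$ and $j$, if and only if $k_i+p_jP_i\simeq k_i$, if and only if $p_jP_i\simeq 0$; the final cancellation is legitimate because we compute inside the groups $\pi_{n_i}(\S)$. Both implications drop out simultaneously, and no separate existence argument for the $P_i$ is required since they are defined by the displayed formula.

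I expect the only delicate points to be bookkeeping ones: justifying that each $(p_j)_{*}$ is additive (which rests on the naturality of the cogroup structure of the source sphere), confirming the vanishing composites $p_1\iota_2\simeq\ast$ and $p_2\iota_1\simeq\ast$, and---should some summand satisfy $n_i=1$---tracking the possibly non-abelian order of terms, where one still concludes $p_jP_i\simeq 0$ by cancellation in the group. None of these is a genuine obstacle; the entire content of the lemma is captured by the projection computation above.
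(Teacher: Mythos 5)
Your proof is correct, and since the paper offers no argument of its own here---the proposition is quoted directly as \cite[Lemma 3.3]{AL2}---there is nothing for it to diverge from: your reduction (maps out of the wedge $\S$ are detected by restriction along the $k_i$, post-composition by $p_j$ is a homomorphism on $\pi_{n_i}(\S\vee \S)$, and $p_1\iota_1\simeq \mathrm{id}_{\S}$, $p_1\iota_2\simeq \ast$, symmetrically for $p_2$, turn $p_j\varphi\simeq \mathrm{id}_{\S}$ into $p_jP_i\simeq 0$) is precisely the standard argument behind the cited lemma, with $P_i$ defined as the difference $\varphi k_i-\iota_1k_i-\iota_2k_i$ rather than postulated. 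Two incidental observations: your argument never uses the hypothesis that $\varphi$ is a co-action, so the equivalence in fact holds for an arbitrary based map $\varphi:\S\to\S\vee\S$ (the co-action hypothesis matters only for how the lemma is applied later); and in the paper's application $\S=\Sigma J_n(\S^1)\simeq\bigvee_{k=1}^{n}\S^{k+1}$, so every wedge summand has dimension at least $2$ and the non-abelian bookkeeping you flag for $n_i=1$ never arises.
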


Adapting Proposition \ref{n-spheres} to our setting, we prove the first of our main results.
\begin{theorem} \label{DMP} The suspension co-$H$-structure $$\overline{\mu}_n : \Sigma J_n(\mathbb S^1)
\to \Sigma J_n(\mathbb S^1) \vee \Sigma J_n(\mathbb S^1)$$ for $n\ge 1$ is given by $\overline{\mu}_n k_i\simeq\iota_1k_i+\iota_2k_i+P_i$, where the perturbation $P_i\simeq\sum_{l=0}^i\phi(i-l,i-1)P_{l,i}$  with the Fox function $\phi$ and
$$P_{l,i}: \mathbb S^{i+1}\to \mathbb S^{i-l+1}\vee \mathbb S^{l+1}\stackrel{k_{i-l}\vee k_l}{\hookrightarrow}\Sigma J_n(\mathbb S^1)\vee \Sigma J_n(\mathbb S^1)$$
determined by the Whitehead product map $\mathbb S^{i+1}\to \mathbb S^{i-l+1}\vee \mathbb S^{l+1}$ for $i=0,\ldots,n$ and $l=0,\ldots,i$.
\end{theorem}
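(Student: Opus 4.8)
The plan is to build $\overline{\mu}_n$ inductively, matching the abstract co-$H$-structure provided by Proposition \ref{n-spheres} against the explicit inductive description of $\overline{\mu}_n$ given above in terms of $\overline{\mu}_{n+1}^1$ and $\overline{\mu}_{n+1}^2$, and then to identify the perturbation term $P_i$ coming from the co-action $\alpha_{n}$ with the Whitehead-product expression $\sum_{l=0}^i\phi(i-l,i-1)P_{l,i}$. Since $\Sigma J_n(\S^1)\simeq\bigvee_{k=1}^n\S^{k+1}$ by \eqref{eq2}, Proposition \ref{n-spheres} applies verbatim with $\S=\Sigma J_n(\S^1)$ and $k_i:\S^{i+1}\hookrightarrow\Sigma J_n(\S^1)$, so it suffices to verify that the $\overline{\mu}_n$ constructed above satisfies $p_1P_i=0=p_2P_i$ (immediate, since $P_i$ factors through a Whitehead product map into the wedge) and to compute the perturbation. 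The base case $n=1$ is trivial: $\Sigma J_1(\S^1)=\S^2=F_2(\S^1)$, and $\overline{\mu}_1=\hat{\mu}_2$ has no perturbation, consistent with the empty range for $P_i$.

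For the inductive step I would pass to $\Sigma J_{n+1}(\S^1)\simeq\Sigma J_n(\S^1)\vee\S^{n+2}$. The summand $\overline{\mu}_{n+1}^1$ simply reproduces the already-known perturbations $P_i$ for $i\le n$ by naturality of the inclusion, so the only genuinely new information is $\overline{\mu}_{n+1}^2$ on the top cell $\S^{n+2}$. Here I would trace the composite
$$\S^{n+2}\hookrightarrow\Sigma\mathcal P_{n+1}(\S^1)\stackrel{\hat{\mu}_{n+2}^2}{\to}F_{n+2}(\S^1)\vee F_{n+2}(\S^1)\stackrel{p_{n+1}\vee p_{n+1}}{\to}\Sigma J_{n+1}(\S^1)\vee\Sigma J_{n+1}(\S^1),$$
using the fact that $\hat{\mu}_{n+2}^2$ is defined via the co-action $\alpha_{n+1}$, together with the description in \cite[Theorem 2.2]{ggw4} of how $\alpha_{n+1}$ decomposes into Whitehead-product contributions indexed by the way each sphere $\S^{i+1}$ sits in the smash-product splitting \eqref{eq1} of $\Sigma\mathcal P_{n+1}(\S^1)$. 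The key point is that each splitting $i+1=(i-l+1)+(l+1)$ contributes a Whitehead product map $P_{l,i}$, and the number of times this product occurs, together with its sign, is precisely the Fox number summed over the relevant partitions — which by definition is $\phi(i-l,i-1)$. This is where Proposition \ref{gen-Fox-W-product} (the translation of Whitehead products into commutators in $\tau_{n+1}(Y)$) enters: the coefficient attached to $[\alpha,\beta]^{\mathbf a\cup\mathbf b}$ in the conjugation action is exactly the Fox number $(-1)^{w+(|\mathbf a|-1)}$, and aggregating over all partitions of the appropriate index set yields the Fox function $\phi$.

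The main obstacle I expect is the bookkeeping that matches the combinatorial index data. Specifically, I must verify that the monomorphism $p_n^*:[\Sigma J_n(\S^1),Y]\hookrightarrow\tau_{n+1}(Y)$ carries the perturbation $P_i$ onto precisely the commutator terms whose coefficients are tallied by $\phi(i-l,i-1)$, and that the range of summation $l=0,\dots,i$ together with the binomial bookkeeping in \eqref{eq1} produces exactly the Fox function evaluated at $(i-l,i-1)$ rather than some shifted argument. Getting the index shift right — that the perturbation on the cell $\S^{i+1}$ uses $\phi(\,\cdot\,,i-1)$ (reflecting that the relevant partitions live in a set of $i-1$ indices, one sphere in each factor of the wedge having been pinned down) — is the delicate part. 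Once this identification is made, the explicit closed form of $\phi$ from Theorem \ref{main-phi} is not needed for the statement itself but can be substituted to make the coefficients fully explicit. Everything else is an application of Proposition \ref{n-spheres} to confirm that the resulting $\overline{\mu}_n$ is indeed a co-$H$-structure, which follows since each $P_i$ visibly satisfies $p_1P_i=0=p_2P_i$.
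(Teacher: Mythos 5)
Your proposal follows essentially the same route as the paper's own proof: induction on $n$ with the trivial base case $\overline{\mu}_1=\hat{\mu}_2$, reduction of the inductive step to the top cell $\mathbb S^{n+2}$ via the restriction $\overline{\mu}_{n+1}|_{J_n(\mathbb S^1)}=\overline{\mu}_n$, tracing the composite $\overline{\mu}_{n+1}^2$ through the co-action $\alpha_{n+1}$ and the projection $p_{n+1}$, and aggregating the Fox numbers $(-1)^{w+(|\mathbf a|-1)}$ of Proposition \ref{gen-Fox-W-product} over all partitions contributing a given Whitehead product to obtain the coefficient $\phi(i-l,i-1)$. The bookkeeping step you flag as the main obstacle is precisely the point the paper itself treats tersely (``we have to include all possible ways of contributing the same Whitehead product''), so your plan is faithful to the published argument.
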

\begin{proof} We proceed inductively on $n\ge 1$. Since the space
$J_n(\mathbb S^1)$ is a $CW$-complex, by the Cellular Approximation Theorem,
$\overline{\mu}_n\simeq \overline{\mu}'_n : \Sigma J_n(\mathbb S^1)
\to \Sigma J_n(\mathbb S^1) \vee \Sigma J_n(\mathbb S^1)$, where $\overline{\mu}'_n$ is cellular.
Hence, from now on, we may assume that $\overline{\mu}_n$ is a cellular map.
\par If $n=1$ then $P_0=0$ and, by definition, $\bar{\mu}_1k_0=\iota_1k_0+\iota_2k_0$.
\par Because $J_n(\mathbb S^1)\subseteq J_{n+1}(\mathbb S^1)$, for dimensional reasons,
we derive that the restriction $\overline{\mu}_{n+1}|_{J_n(\mathbb S^1)}=\overline{\mu}_n$.
Consequently, we must analyze the map $\overline{\mu}_{n+1} k_{n+1}: \mathbb S^{n+2}\to
\Sigma J_{n+1}(\mathbb S^1)\vee \Sigma J_{n+1}(\mathbb S^1)$ determined by the composition
$$\overline{\mu}_{n+1}^2 :
\mathbb S^{n+2} \hookrightarrow \Sigma \mathcal P_{n+1}(\mathbb S^1)
\stackrel{\hat{\mu}_{n+2}^2}{\longrightarrow} F_{n+2}(\S^1)\vee F_{n+1}(\S^1)\stackrel{p_{n+1}\vee
p_{n+1}}{\longrightarrow} \Sigma J_{n+1}(\mathbb S^1)\vee \Sigma J_{n+1}(\mathbb S^1).
$$
But, in view of the decompositions (\ref{eq1}) and (\ref{eq2}), we derive that
$$p_{n+1}|_{\Sigma \mathcal P_{n+1}(\mathbb S^1)} : \Sigma \mathcal P_{n+1}(\mathbb S^1)\simeq \bigvee_{k=1}^{n+1}\binom{n}{k-1}\mathbb S^{k+1}
\to \bigvee _{k=1}^{n+1}\mathbb S^{k+1}$$
restricts to the inclusion map $\S^{k+1}\hookrightarrow \bigvee _{k=1}^{n+1}\mathbb S^{k+1}$ for $k=1,\ldots,n+1$.
\par Because $\hat{\mu}_{n+2}^2$ is defined via the co-action $\alpha_{n+1}: \Sigma \mathcal P_{n+1}(\mathbb S^1) \to F_{n+1}(\mathbb S^1) \vee \Sigma \mathcal P_{n+1}(\mathbb S^1)$,
granting the monomorphism $[\Sigma J_n(\S^1),Y] \to [F_{n+1}(\S^1),Y]=\tau_{n+1}(Y)$ analysed in \cite[Theorem 2.2]{ggw5}, we have
to include all possible ways of contributing the same Whitehead product and thus the sum of all those $(-1)^{w+(|{\bf a}|-1)}$
given by Proposition  \ref{gen-Fox-W-product}.
This leads to $\overline{\mu}_{n+1}k_{n+1}-\iota_1k_{n+1}-\iota_2k_{n+1}\simeq P_{n+1}\simeq\sum_{i=0}^{n+1}\phi(n+1-i,n) P_{n+1,i}$
with $P_{n+1,i}: \mathbb S^{n+2}\to \mathbb S^{n+2-i}\vee \mathbb S^{i+1}\hookrightarrow J_{n+1}(\mathbb S^1)\vee J_{n+1}(\mathbb S^1)$
determined by the Whitehead product map $\mathbb S^{n+2}\to \mathbb S^{n+2-i}\vee \mathbb S^{i+1}$
and the proof is complete.
\end{proof}

When $Y$ is a $W$-space, that is, all Whitehead products vanish, it has
been shown in \cite[Corollary 3.6]{ggw5} that $[J(\S^1),\Omega Y]$ is isomorphic to the direct product of $\prod_{i= 2}^\infty\pi_i(Y)$
so that $[J(\S^1),\Omega Y]$ is abelian, in particular. As an easy consequence of Theorem \ref{DMP}, we have
the following corollary.
\begin{corollary}\label{abelian-cohen-groups}{\em
Let $Y$ be a path connected pointed space such that the Whitehead products
$[f,g]: \mathbb S^{k+l+1}\to Y$ for $f: \mathbb S^{k+1}\to Y$
and $g : \mathbb S^{l+1}\to Y$ vanish with $k,l$ even. Then,
the groups $[J_n(\mathbb S^1),\Omega Y]$ for $n\ge 1$ and $[J(\mathbb S^1),\Omega Y]$
are abelian provided $Y$ is a path connected pointed
space with $\pi_i(Y)=0$ for $i$ odd.}
\end{corollary}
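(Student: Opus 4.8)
The plan is to read the group commutator directly off the comultiplication $\overline{\mu}_n$ supplied by Theorem \ref{DMP} and to show, by a parity analysis, that it always vanishes. Using the identification $\Sigma J_n(\mathbb S^1)\simeq\bigvee_{k=1}^n\mathbb S^{k+1}$ from \eqref{eq2}, we have $[J_n(\mathbb S^1),\Omega Y]=[\Sigma J_n(\mathbb S^1),Y]$, with underlying set $\prod_{k=1}^n\pi_{k+1}(Y)$ and group law $f+g=\langle f,g\rangle\circ\overline{\mu}_n$, where $\langle f,g\rangle:\Sigma J_n(\mathbb S^1)\vee\Sigma J_n(\mathbb S^1)\to Y$ restricts to $f$ and $g$ on the two copies. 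Since $g+f=\langle f,g\rangle\circ T\overline{\mu}_n$ with $T$ the swap of wedge factors, the group is abelian exactly when $\overline{\mu}_n\simeq T\overline{\mu}_n$ after mapping into $Y$, i.e. when each perturbation obeys $P_i\simeq TP_i$ in $\pi_\ast(Y)$.

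First I would make this commutator explicit. It suffices to commute two classes supported on single wedge summands, say $\alpha\in\pi_{a+1}(Y)$ on the $a$-th summand and $\beta\in\pi_{b+1}(Y)$ on the $b$-th, since by Theorem \ref{DMP} every $P_i$ is a sum of two-factor Whitehead products $P_{l,i}$, so distinct pairs feed distinct summands. The only term coupling $\alpha$ and $\beta$ lives in the summand $i=a+b$: in $f+g$ it is $\phi(a,a+b-1)[\alpha,\beta]$, while in $g+f$ the swap together with the graded commutativity $[\beta,\alpha]=(-1)^{(a+1)(b+1)}[\alpha,\beta]$ produces $(-1)^{(a+1)(b+1)}\phi(b,a+b-1)[\alpha,\beta]$. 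Hence the commutator is
$$(\alpha,\beta)=\bigl(\phi(a,a+b-1)-(-1)^{(a+1)(b+1)}\phi(b,a+b-1)\bigr)\,[\alpha,\beta]\in\pi_{a+b+1}(Y).$$
Abelianness thus reduces to checking, for every pair $(a,b)$, that either this scalar coefficient or the Whitehead product $[\alpha,\beta]$ vanishes for all $\alpha\in\pi_{a+1}(Y)$ and $\beta\in\pi_{b+1}(Y)$.

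I would then split into three cases. If $a$ and $b$ are both odd, then $a+b-1$ is odd and both arguments of $\phi$ are odd, so Theorem \ref{main-phi} gives $\phi(a,a+b-1)=\binom{\frac{a+b-2}{2}}{\frac{a-1}{2}}$ and $\phi(b,a+b-1)=\binom{\frac{a+b-2}{2}}{\frac{b-1}{2}}$; since $\frac{a-1}{2}+\frac{b-1}{2}=\frac{a+b-2}{2}$, the symmetry $\binom{N}{r}=\binom{N}{N-r}$ makes these equal, while $(-1)^{(a+1)(b+1)}=1$, so the coefficient cancels. If exactly one of $a,b$ is even, the corresponding group is $\pi_{\mathrm{odd}}(Y)=0$, so one of $\alpha,\beta$ is null and $[\alpha,\beta]=0$. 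If both $a$ and $b$ are even, then $[\alpha,\beta]$ is a Whitehead product of classes in $\pi_{a+1}(Y)$ and $\pi_{b+1}(Y)$ with $a,b$ even, and so vanishes by hypothesis. In every case $(\alpha,\beta)=0$, so $[J_n(\mathbb S^1),\Omega Y]$ is abelian for all $n\ge1$; and since $[J(\mathbb S^1),\Omega Y]\cong\lim_{\leftarrow}[J_n(\mathbb S^1),\Omega Y]$ is an inverse limit of abelian groups, it is abelian as well.

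The step I expect to be the main obstacle is the second one: extracting the exact scalar $\phi(a,a+b-1)-(-1)^{(a+1)(b+1)}\phi(b,a+b-1)$ from $\overline{\mu}_n$, which requires correctly matching the two ways a given Whitehead product appears in $P_{a+b}$ under $T$ and carefully tracking the graded-commutativity sign. Once this coefficient is pinned down, the decisive and pleasant point is that the odd–odd case is killed purely formally by the binomial symmetry of Theorem \ref{main-phi}, with no condition on $Y$, so that only the mixed and even–even Whitehead products remain, to be disposed of by $\pi_{\mathrm{odd}}(Y)=0$ and by the even–even vanishing hypothesis, respectively.
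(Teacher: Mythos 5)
Your proposal is correct and takes essentially the same route as the paper: the paper treats the corollary as a direct consequence of Theorem \ref{DMP}, and the commutator coefficient $\phi(a,a+b-1)-(-1)^{(a+1)(b+1)}\phi(b,a+b-1)$ together with the parity case analysis (binomial symmetry killing the odd--odd case, the hypotheses on $Y$ killing the mixed and even--even cases) that you work out is precisely the content of the paper's Theorem \ref{abelian} and its Table \eqref{table}. Your write-up simply makes explicit the details the paper leaves as ``an easy consequence,'' including the passage to the inverse limit for $[J(\mathbb S^1),\Omega Y]$.
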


Next, we make use of the function $\phi$ of Section 2 to determine whether $\alpha \in \pi_{n+1}(Y)$ and $\beta \in \pi_{m+1}(Y)$ commute in the group $[J(\mathbb S^1),\Omega Y]$.

\begin{theorem}\label{abelian}
Let $\alpha\in \pi_{n+1}(Y), \beta\in \pi_{m+1}(Y)$. Suppose the Whitehead product $[\alpha,\beta]\ne 0$ and has order $k$. Then:
\begin{itemize}
\item if $k=\infty$ then the product $\alpha  \#\beta \in [J(\mathbb S^1),\Omega Y]$ coincides with $\beta \#\alpha$ iff both $n$ and $m$ are odd; \\
\item if $k<\infty$ then the product $\alpha  \#\beta \in [J(\mathbb S^1),\Omega Y]$ coincides with $\beta \#\alpha$ iff both $n$ and $m$ are odd or
\[
k~\text{divides} \begin{cases}
             \binom{\frac{n+m-1}{2}}{\frac{m}{2}} & \text{if $n$ is odd and $m$ is even;} \\
             \binom{\frac{n+m-1}{2}}{\frac{n}{2}} & \text{if $n$ is even and $m$ is odd;} \\
             \binom{\frac{n+m}{2}}{\frac{n}{2}} & \text{if $n$ and $m$ is both even.}
            \end{cases}
            \]
\end{itemize}
\end{theorem}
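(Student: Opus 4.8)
The plan is to extract the multiplication in $[J(\mathbb S^1),\Omega Y]$ from the co-$H$-structure of Theorem \ref{DMP}, collapse the identity $\alpha\#\beta=\beta\#\alpha$ to a single equation in $\pi_{n+m+1}(Y)$ governed by $[\alpha,\beta]$, and then evaluate the resulting integer coefficient by means of the closed formula for $\phi$ in Theorem \ref{main-phi}.

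First I would record the product formula. Identifying $f\in[J(\mathbb S^1),\Omega Y]$ with its sequence of components $f_i:=f k_i\in\pi_{i+1}(Y)$ and using $f\#g=(f,g)\circ\overline\mu$ together with Theorem \ref{DMP}, the relation $\overline\mu k_i\simeq\iota_1 k_i+\iota_2 k_i+P_i$ gives, componentwise,
\[
(f\#g)_i=f_i+g_i+\sum_{l=0}^{i}\phi(i-l,i-1)\,[f_{i-l},g_l],
\]
because $(f,g)\circ P_{l,i}$ is exactly the Whitehead product $[f_{i-l},g_l]$ and the perturbation $P_i$ carries no higher brackets. Now $\alpha$ is supported in degree $n$ and $\beta$ in degree $m$, so every bracket in this sum vanishes except the one with $i=n+m$, $l=m$. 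Hence $\alpha\#\beta$ and $\beta\#\alpha$ agree in every component except possibly the one of degree $n+m$, where
\[
(\alpha\#\beta)_{n+m}=\phi(n,n+m-1)\,[\alpha,\beta],\qquad
(\beta\#\alpha)_{n+m}=\phi(m,n+m-1)\,[\beta,\alpha].
\]

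Next I would invoke the graded commutativity $[\beta,\alpha]=(-1)^{(n+1)(m+1)}[\alpha,\beta]$ of Whitehead products to combine the two expressions, so that $\alpha\#\beta=\beta\#\alpha$ is equivalent to $C\,[\alpha,\beta]=0$ with
\[
C=\phi(n,n+m-1)-(-1)^{(n+1)(m+1)}\,\phi(m,n+m-1).
\]
Since $[\alpha,\beta]$ has order $k$, this holds precisely when $C=0$ (the case $k=\infty$) or $k\mid C$ (the case $k<\infty$), so it remains only to make $C$ explicit.

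The final step is to compute $C$ in each parity case from Theorem \ref{main-phi}. If $n,m$ are both odd the sign is $+1$ and the two $\phi$-values are the equal binomials $\binom{(n+m-2)/2}{(n-1)/2}=\binom{(n+m-2)/2}{(m-1)/2}$, whence $C=0$ and $\alpha,\beta$ commute unconditionally. If exactly one of $n,m$ is even the sign is again $+1$, one of the two $\phi$-terms vanishes (odd first argument against an even second argument), and $C$ reduces to the single binomial listed in the statement. The only case requiring an extra manipulation is $n,m$ both even: there the sign is $-1$, giving $C=-\bigl[\binom{(n+m-2)/2}{n/2}+\binom{(n+m-2)/2}{m/2}\bigr]$, and Pascal's rule $\binom{p+q-1}{p}+\binom{p+q-1}{p-1}=\binom{p+q}{p}$ with $p=n/2$, $q=m/2$ collapses this to $-\binom{(n+m)/2}{n/2}$. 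The divisibility conditions then read off immediately. The main obstacle is keeping the signs consistent—the graded-commutativity sign $(-1)^{(n+1)(m+1)}$ must be balanced against the signs already encoded in $\phi$—together with recognizing the Pascal collapse in the all-even case; the remainder is bookkeeping.
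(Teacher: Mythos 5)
Your proposal is correct and follows essentially the same route as the paper: it reduces $\alpha\#\beta=\beta\#\alpha$ to comparing $\phi(n,n+m-1)$ with $(-1)^{(n+1)(m+1)}\phi(m,n+m-1)$ (the paper's quantity $\Delta$), then evaluates the four parity cases via Theorem \ref{main-phi}, the symmetry $\binom{a}{b}=\binom{a}{a-b}$, and the same Pascal collapse in the even--even case. The only cosmetic difference is that you derive the componentwise product formula directly from Theorem \ref{DMP}, whereas the paper invokes its formula \eqref{J_n-product}; the content is identical.
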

\begin{proof}
First of all, if there are non-trivial perturbations $P_{l,i}$ determined by the Whitehead product $[\alpha,\beta]$ then $l=m$ and $i-l=n$. In other words, $P_{l,i}=P_{m,n+m}$. It follows that $P_{n+m}=\phi(n,n+m-1)P_{m,n+m}$. Now, we have
$$
\alpha  \# \beta =\alpha + \beta + \phi(n,n+m-1)[\alpha,\beta] \qquad \text{and} \qquad \beta \# \alpha =\alpha + \beta + \phi(m,n+m-1)[\beta,\alpha].
$$
Since $[\beta,\alpha]=(-1)^{(n+1)(m+1)}[\alpha,\beta]$, it suffices to compare $\phi(n,n+m-1)$ with $\phi(m,n+m-1)(-1)^{(n+1)(m+1)}$.

Let $\Delta=|\phi(n,n+m-1)-\phi(m,n+m-1)(-1)^{(n+1)(m+1)}|$. Depending on the parity of $n$ and $m$, we have the following table:

\begin{center}
\begin{tabular}{|c|c|c|c|c|l|}
  \hline
$n$ & $m$ & $\phi(n,n+m-1)$ & $\phi(m,n+m-1)(-1)^{(n+1)(m+1)}$ & $\Delta$ \\ \hline
{odd}    &  {odd}     & $\binom{\frac{n+m-2}{2}}{\frac{n-1}{2}}$ & $\binom{\frac{n+m-2}{2}}{\frac{m-1}{2}}$ & $0$\\ \hline
{odd}    &  {even}    & $0$ & $-\binom{\frac{n+m-1}{2}}{\frac{m}{2}}$ & $\binom{\frac{n+m-1}{2}}{\frac{m}{2}}$\\ \hline
{even}   &  {odd}    & $-\binom{\frac{n+m-1}{2}}{\frac{n}{2}}$ & $0$ & $\binom{\frac{n+m-1}{2}}{\frac{n}{2}}$\\ \hline
{even}   &  {even}   & $-\binom{\frac{n+m-2}{2}}{\frac{n}{2}}$ & $\binom{\frac{n+m-2}{2}}{\frac{m}{2}}$ & $\binom{\frac{n+m}{2}}{\frac{n}{2}}$\\ \hline
\end{tabular}
\end{center}
\begin{equation}\label{table}
\text{Table for }\Delta
\end{equation}

Because of $\binom{a}{b}=\binom{a}{a-b}$, the equality $\Delta=0$ holds exactly when $n$ and $m$ are both odd.
For the case when both $n$ and $m$ are even, we note
that $\binom{\frac{n+m-2}{2}}{\frac{m}{2}}=\binom{\frac{n+m-2}{2}}{\frac{n}{2}-1}$ and the Pascal triangle
asserts that
$$
\Delta = \binom{\frac{n+m-2}{2}}{\frac{n}{2}-1} + \binom{\frac{n+m-2}{2}}{\frac{n}{2}} = \binom{\frac{n+m}{2}}{\frac{n}{2}}.
$$
This completes the proof.
\end{proof}
 Now, using the fact that the group structure of $[J_n(\S^1),\Omega Y]$ is induced by the group structure of
the torus homotopy group $\tau_{n+1}(Y)$ which in turn is determined completely by the homotopy
groups $\{\pi_i(Y)\}_{1\le i\le n+1}$ and their Whitehead products, we give explicitly the group
structure of $[J_n(\S^1),\Omega Y]$ following Theorem \ref{DMP} and using the function $\phi$.

Since $[J_n(\S^1),\Omega Y]$ is in one-to-one correspondence with $\displaystyle{\prod_{i=2}^{n+1}\pi_i(Y)}$ as sets, we denote the group multiplication determined
by the co-$H$-structure on $\Sigma J_n(\mathbb{S}^1)$ by $\#_n$, i.e.,
$$
[J_n(\S^1),\Omega Y]\cong \left(\prod_{i=2}^{n+1} \pi_i(Y), \#_n\right).
$$

Now, we describe $\#_n$ inductively as follows. First write
$$\prod_{i=2}^{n+1} \pi_i(Y)=\left(\prod_{i=2}^{n} \pi_i(Y)\right)\times \pi_{n+1}(Y).$$ For any $\displaystyle{\alpha \in \prod_{i=2}^{n+1} \pi_i(Y)}$, write $\alpha=(\alpha_1,\alpha_2)$, where $\displaystyle{\alpha_1\in \prod_{i=2}^{n} \pi_i(Y)}$ and $\alpha_2\in \pi_{n+1}(Y)$. Moreover, $(\alpha)_k$ denotes the coordinate of $\alpha$ in $\pi_k(Y)$. For $\displaystyle{(\alpha_1,\alpha_2), (\beta_1,\beta_2)\in  \left(\prod_{i=2}^{n} \pi_i(Y)\right)\times \pi_{n+1}(Y)}$,
we have
\begin{equation}\label{J_n-product}
(\alpha_1,\alpha_2)\#_n(\beta_1,\beta_2):=\left(\alpha_1\#_{n-1}\beta_1, \alpha_2+\beta_2+\sum_{k+j=n+2}\phi(k-1,k+j-3)[(\alpha_1)_k,(\beta_1)_j]\right).
\end{equation}
Recall that the natural inclusion $j_n :J_{n-1}(\S^1) \hookrightarrow J_n(\S^1)$ induces a surjective homomorphism $j_n^\ast: [J_n(\S^1),\Omega Y] \to [J_{n-1}(\S^1),\Omega Y]$, where $\mbox{Ker}\, j_n^\ast\cong \pi_{n+1}(Y)$ is central. In other words,
$$
j_n^\ast((\alpha)_1, (\alpha)_2,\ldots,(\alpha)_n, (\alpha)_{n+1})=((\alpha)_1, (\alpha)_2,\ldots,(\alpha)_n)
$$
for $((\alpha)_1, (\alpha)_2,\ldots,(\alpha)_n, (\alpha)_{n+1})\in[\Sigma J_n(\mathbb{S}^1),Y]$.

\section{Computations}

In this section, we revisit, simplify, and generalize the examples from Section 1 using the group structure of $[J_n(\S^1),\Omega Y]$ in the previous
section together with the function $\phi$. Furthermore, we determine whether $[J_k(\mathbb S^1),\Omega \mathbb S^{2n}]$ is abelian for certain values of $k$.

\begin{example}\label{ex1-revisit}
The multiplication in $[J_2(\mathbb S^1),\Omega \mathbb S^2]$ as in Example \ref{ex1} is given by the following rule:
$$(\alpha_1,\alpha_2) \#_2 (\beta_1,\beta_2)=(\alpha_1+\beta_1,\alpha_2+\beta_2+2\alpha_1\beta_1),
$$
where $\alpha_1,\beta_1\in \pi_2(\S^2)\cong \mathbb Z$ and $\alpha_2,\beta_2 \in \pi_3(\S^2)
\cong \mathbb Z$.

The perturbation $P: \S^3\to \Sigma J_2(\S^1)\vee \Sigma J_2(\S^1)$ in this case is
determined by the basic Whitehead product $[i_1,i_2]$, where
$i_j :\S^2\to\S^2\vee\S^2$ is the corresponding inclusion for $j=1,2$
which yields in the group $[J_2(\mathbb S^1),\Omega \mathbb S^2]$ the relation $[\iota_2,\iota_2]=2\eta_2$
for the generators $\iota_2\in\pi_2(\S^2)$ and $\eta_2\in\pi_3(\S^2)$
given by the Hopf map. Further, the isomorphism
$\mathbb Z\oplus \mathbb Z\cong [J_2(\S^1),\Omega \S^2]$ is given
by $(m,n) \mapsto(m,m(m-1)+n)$ for $(m,n)\in\mathbb Z\oplus \mathbb Z$.
\end{example}

More generally, we can use Theorem \ref{DMP} to analyze the co-$H$-structure of $\Sigma J_2(\mathbb S^1)$.
The perturbation $P: \mathbb S^{3}\to \Sigma J_2(\mathbb S^1)\vee \Sigma J_2(\mathbb S^1)$ is given by the
Whitehead product $\mathbb S^{3}\to \mathbb S^2\vee \mathbb S^2$.
Therefore, the multiplication on the set $[J_2(\mathbb S^1), \Omega Y]=\pi_2(Y) \times \pi_3(Y)$
is given by
$$(\alpha_1,\alpha_2)\#_2(\beta_1,\beta_2)=(\alpha_1+\beta_1,\alpha_2+\beta_2+[\alpha_1,\beta_1]),$$
where $\alpha_1,\beta_1\in \pi_2(Y)$, $\alpha_2,\beta_2\in \pi_3(Y)$ and
$[\alpha_1,\beta_1]$ denotes the Whitehead product.
Note that $[\beta_1,\alpha_1]=(-1)^4[\alpha_1,\beta_1]=[\alpha_1,\beta_1]$. Consequently, we generalize Example \ref{ex1} by the following proposition.
\begin{proposition}\label{J_2-abelian}{\em
The group $[J_2(\mathbb S^1),\Omega Y]$ is an abelian group.}
\end{proposition}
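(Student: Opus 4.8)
The plan is to prove abelianness directly from the explicit group law on $[J_2(\mathbb{S}^1),\Omega Y]$ obtained as a specialization of Theorem \ref{DMP}. Under the set bijection $[J_2(\mathbb{S}^1),\Omega Y]\cong\pi_2(Y)\times\pi_3(Y)$ the multiplication is
$$(\alpha_1,\alpha_2)\#_2(\beta_1,\beta_2)=\bigl(\alpha_1+\beta_1,\;\alpha_2+\beta_2+[\alpha_1,\beta_1]\bigr),$$
where $\alpha_1,\beta_1\in\pi_2(Y)$, $\alpha_2,\beta_2\in\pi_3(Y)$, and $[\alpha_1,\beta_1]\in\pi_3(Y)$ is the Whitehead product. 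First I would simply write down the product taken in the reverse order,
$$(\beta_1,\beta_2)\#_2(\alpha_1,\alpha_2)=\bigl(\beta_1+\alpha_1,\;\beta_2+\alpha_2+[\beta_1,\alpha_1]\bigr),$$
and compare the two expressions coordinate by coordinate.

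In the first coordinate equality is immediate since $\pi_2(Y)$ is abelian, so $\alpha_1+\beta_1=\beta_1+\alpha_1$; likewise the additive part $\alpha_2+\beta_2=\beta_2+\alpha_2$ of the second coordinate coincides because $\pi_3(Y)$ is abelian. Hence the entire question collapses to whether the two Whitehead-product terms agree, that is, whether $[\alpha_1,\beta_1]=[\beta_1,\alpha_1]$.

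This last point is the only step requiring an argument, and it follows from the graded commutativity of the Whitehead product. Since $\alpha_1$ and $\beta_1$ are both classes in $\pi_2(Y)$ — that is, of degree $2$ — we have $[\beta_1,\alpha_1]=(-1)^{2\cdot 2}[\alpha_1,\beta_1]=[\alpha_1,\beta_1]$, the sign being $(-1)^4=+1$. Substituting this identity shows that the two products coincide, so $\#_2$ is commutative and $[J_2(\mathbb{S}^1),\Omega Y]$ is abelian. I do not foresee any genuine obstacle here: the only subtlety is the sign in the commutativity law, and because both factors have the same even degree that sign is trivial. This is moreover consistent with Theorem \ref{abelian} applied to $\alpha_1,\beta_1\in\pi_2(Y)$, where $n=m=1$ are both odd, forcing commutativity independently of the value of $[\alpha_1,\beta_1]$.
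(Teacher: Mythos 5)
Your proof is correct and follows essentially the same route as the paper: both derive the explicit multiplication $(\alpha_1,\alpha_2)\#_2(\beta_1,\beta_2)=(\alpha_1+\beta_1,\alpha_2+\beta_2+[\alpha_1,\beta_1])$ from Theorem \ref{DMP} and then observe that $[\beta_1,\alpha_1]=(-1)^{2\cdot 2}[\alpha_1,\beta_1]=[\alpha_1,\beta_1]$ since both classes lie in $\pi_2(Y)$. The coordinate-by-coordinate comparison and the sign computation match the paper's argument exactly.
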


\begin{remark}
Since $[J_2(\mathbb S^1),\Omega Y]$ is abelian, using induction on the central extension $0\to \pi_{n+1}(Y) \to [J_n(\S^1),\Omega Y]\to [J_{n-1}(\S^1),\Omega Y]\to 1$ shows that the group $[J_n(\S^1),\Omega Y]$ is nilpotent with nilpotency class $\le n-1$.
\end{remark}

\begin{proposition} {\em If $Y$ is a $(2n-1)$-connected space then there is
an isomorphism of groups $[J_{4n-3}(\S^1),\Omega Y]\cong\pi_{2n}(Y)\oplus\pi_{2n+1}(Y)\oplus\cdots\oplus\pi_{4n-2}(Y)$,
the group $[J_{4n-2}(\S^1),\Omega Y]$ is abelian and the short exact sequence
$$0\to\pi_{4n-1}(Y)\longrightarrow[J_{4n-2}(\S^1),\Omega Y]\longrightarrow[J_{4n-3}(\S^1),\Omega Y]\to 0$$
splits provided $\pi_{2n}(Y)$ is free abelian.
In particular, this holds for $Y=\S^{2n}$.}
\end{proposition}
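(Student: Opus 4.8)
The plan is to combine the explicit recursive multiplication \eqref{J_n-product} with the connectivity of $Y$ to eliminate almost all Whitehead products on dimensional grounds, and then to reduce the splitting to an $\mathrm{Ext}$-computation that only involves $\pi_{2n}(Y)$.

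First I would prove the direct-sum decomposition of $[J_{4n-3}(\S^1),\Omega Y]$. Since $Y$ is $(2n-1)$-connected, $\pi_i(Y)=0$ for $i<2n$, so as a set $[J_{4n-3}(\S^1),\Omega Y]\cong\prod_{i=2n}^{4n-2}\pi_i(Y)$. In \eqref{J_n-product} every perturbation term is a Whitehead product $[(\alpha_1)_k,(\beta_1)_j]$ lying in $\pi_{k+j-1}(Y)$ with $k,j\ge 2n$, hence with $k+j-1\ge 4n-1>4n-2$. As each such target sits strictly above the top surviving factor $\pi_{4n-2}(Y)$, all perturbations vanish, $\#_{4n-3}$ reduces to coordinatewise addition, and the claimed isomorphism follows.

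Next I would show $[J_{4n-2}(\S^1),\Omega Y]$ is abelian. Now the top factor is $\pi_{4n-1}(Y)$, and in $\#_{4n-2}$ the only surviving perturbation comes from $k+j=4n$ with $k,j\ge 2n$, forcing $k=j=2n$; it equals $\phi(2n-1,4n-3)[(\alpha_1)_{2n},(\beta_1)_{2n}]$. Comparing a product with its reverse, the sole possible discrepancy is $\phi(2n-1,4n-3)\bigl([(\alpha_1)_{2n},(\beta_1)_{2n}]-[(\beta_1)_{2n},(\alpha_1)_{2n}]\bigr)$, and since both factors are taken in dimension $2n$ the graded-commutativity sign is $(-1)^{(2n)^2}=1$, so they cancel. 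This is precisely the ``both odd'' row of the table in Theorem \ref{abelian}, the relevant pair of indices there both being equal to $2n-1$; hence the group is abelian.

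Finally I would establish the splitting. Both end terms being abelian, the sequence is an abelian extension classified by a class in $\mathrm{Ext}^1([J_{4n-3}(\S^1),\Omega Y],\pi_{4n-1}(Y))$. The crucial point is that its defining (symmetric) cocycle $c(\alpha_1,\beta_1)=\phi(2n-1,4n-3)[(\alpha_1)_{2n},(\beta_1)_{2n}]$ depends only on the $\pi_{2n}(Y)$-coordinates, so $c$ is the pullback along the projection $p\colon[J_{4n-3}(\S^1),\Omega Y]\twoheadrightarrow\pi_{2n}(Y)$ of a cocycle $\tilde c$ on $\pi_{2n}(Y)$; by naturality the extension class is $p^*[\tilde c]$. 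When $\pi_{2n}(Y)$ is free abelian, $\mathrm{Ext}^1(\pi_{2n}(Y),\pi_{4n-1}(Y))=0$, hence $[\tilde c]=0$ and the pulled-back class vanishes, giving the splitting; for $Y=\S^{2n}$ this applies because $\pi_{2n}(\S^{2n})\cong\mathbb Z$ is free. The main obstacle is exactly this last reduction: although the base $[J_{4n-3}(\S^1),\Omega Y]$ itself need not be free abelian, the extension only ``sees'' $\pi_{2n}(Y)$ through the cocycle, so freeness of that single homotopy group already forces the extension class to be trivial.
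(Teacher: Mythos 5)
Your proposal is correct, and for the first two claims it follows the paper's own argument almost verbatim: the $(2n-1)$-connectivity of $Y$ forces every perturbation term $[(\alpha_1)_k,(\beta_1)_j]$ (which requires $k,j\ge 2n$ to be nonzero) out of the coordinate range of $[J_{4n-3}(\S^1),\Omega Y]$, so $\#_{4n-3}$ is coordinatewise addition; and in $[J_{4n-2}(\S^1),\Omega Y]$ the unique surviving perturbation $\phi(2n-1,4n-3)[(\alpha_1)_{2n},(\beta_1)_{2n}]$ is symmetric by the odd--odd row of Table \eqref{table}, giving commutativity. Where you genuinely differ is the splitting. The paper constructs an explicit section: freeness of $\pi_{2n}(Y)$ lets one define a homomorphic lift on $\pi_{2n}(Y)$ whose values carry quadratic Whitehead-product correction terms (compare the formula $(m,n)\mapsto(m,m(m-1)+n)$ of Example \ref{ex1-revisit}), and this extends by the identity on the remaining factors, which the cocycle never sees. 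You instead classify the extension by its class in $\mathrm{Ext}^1$ and observe that the factor set is the pullback along $p:[J_{4n-3}(\S^1),\Omega Y]\to\pi_{2n}(Y)$ of a symmetric bilinear cocycle on $\pi_{2n}(Y)$, so the extension class lies in the image of $p^*$ applied to $\mathrm{Ext}^1\bigl(\pi_{2n}(Y),\pi_{4n-1}(Y)\bigr)=0$. Your route is more conceptual and gets the splitting without exhibiting a section; the paper's route produces the section explicitly, which is what gets reused in the computations of Section 4. One caution: your justification ``both end terms being abelian, the sequence is an abelian extension'' is not a valid implication on its own (a central extension of an abelian group by an abelian group can be non-abelian, e.g.\ the integral Heisenberg group); what entitles you to the $\mathrm{Ext}^1$ classification is the abelianness of the middle group, which you had just proved (equivalently, the symmetry of the factor set), so the argument should cite that rather than the abelianness of the two end terms.
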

\begin{proof} For dimensional reasons and by the connectivity of $Y$, there are no non-trivial Whitehead products obtained from the elements in $[J_{4n-3}(\S^1),\Omega Y]$, which in turn is isomorphic to $\pi_{2n}(Y)\oplus\pi_{2n+1}(Y)\oplus\cdots\oplus\pi_{4n-2}(Y)$. The only possibly non-trivial Whitehead products in $[J_{4n-2}(\S^1),\Omega Y]$ come from elements $\alpha, \beta \in \pi_{2n}(Y)$. It follows from Table \eqref{table} that $\Delta =0$, that is, $[\alpha, \beta]=[\beta,\alpha]$. This implies that $[J_{4n-2}(\S^1),\Omega Y]$ is abelian. Finally, when $\pi_{2n}(Y)$ is free abelian, one can find a section $\pi_{2n}(Y) \to [J_{4n-2}(\S^1),\Omega Y]$ so that $\alpha \#_{4n-3} \beta \mapsto (\alpha \#_{4n-3} \beta, [\alpha,\beta])$.
Hence, this gives rise to a section $\pi_{2n}(Y)\oplus\pi_{2n+1}(Y)\oplus\cdots\oplus\pi_{4n-2}(Y) \to[J_{4n-2}(\S^1),\Omega Y]$.
\end{proof}

\begin{remark}
The assumption that $\pi_{2n}(Y)$ being free abelian above is only sufficient for the splitting of the short exact sequence as we illustrate in the following examples.
\end{remark}

\par It is well-known that the $(n-2)$-suspension $M^n=\Sigma^{n-2}\mathbb R P^2$ of the projective plane $\mathbb R P^2$
is the Moore space of type $(\Z_2,n-1)$ for $n\ge 3$ so that $M^n$ is $(n-2)$-connected.
Given the inclusion $i_2 : \S^1\hookrightarrow \mathbb R P^2$ and the collapsing map $p_2 : \mathbb R P^2\to \mathbb R P^2/\S^1=\S^2$,
we write $i_n =\Sigma^{n-2}i_2 : \S^{n-1}\to M^n$ and $p_n = \Sigma^{n-2}p_2 : M^n\to \S^n$ for the  $(n-2)$-suspension maps with $n\ge 2$, respectively.

\begin{example}\label{JM-ex1}
Take $Y=M^3$. In view of \cite[Proposition 3.6]{G-M}, $\pi_2(M^3)= \mathbb Z_2\langle i_3\rangle$ and $\pi_3(M^3)=\mathbb Z_4\langle i_3\circ \eta_2\rangle$.
Then, the following short exact sequence
$$0\to\pi_{3}(M^3)\longrightarrow[J_{2}(\S^1),\Omega M^3]\longrightarrow[J_{1}(\S^1),\Omega M^3]\to 0$$
becomes
\begin{equation}\label{Mukai_ex1}
0\to \mathbb Z_4 \to [J_{2}(\S^1),\Omega M^3] \to \mathbb Z_2 \to 0.
\end{equation}
By Proposition \ref{J_2-abelian}, $[J_{2}(\S^1),\Omega M^3]$ is abelian and thus is isomorphic to either $\mathbb Z_8$ or $\mathbb Z_4 \oplus \mathbb Z_2$.
It was pointed out to us by J.\ Mukai that the Whitehead product $[i_3,i_3]\ne 0$ and its order is $2$. It is straighforward to see, based upon the group structure of $[J_{2}(\S^1),\Omega M^3]$, that there are no elements of order $8$ in $[J_{2}(\S^1),\Omega M^3]$.
Hence, the short exact sequence \eqref{Mukai_ex1} splits.
\end{example}

Next, we provide a similar example \footnote{The authors are grateful to Juno Mukai for providing the example below.} as above, where the sequence does not split.

\begin{example}\label{JM-ex2}
The  short exact sequence
$$0\to \pi_{11}(M^7)\longrightarrow[J_{10}(\S^1),\Omega M^7]\longrightarrow[J_9(\S^1),\Omega M^7]\cong\pi_6(M^7)\oplus\cdots\oplus\pi_{10}(M^7)\to 0$$
is central, does not split, and the group $[J_{10}(\S^1),\Omega M^7]$ is abelian.

To see this, we recall that $\pi_{6}(M^7)=\Z_2\langle i_7\rangle$, and in view of \cite{MS,wu}, it follows
that $\pi_{11}(M^7)\cong \Z_2\langle [i_7,i_7]\rangle$. Consider the element $\alpha =(0,0,0,0,i_7,0,0,0,0)$. Then
by \eqref{J_n-product}, we get
$$(\alpha,0) \#_{10} (\alpha,0) = (2\alpha, [i_7,i_7]) =(0,[i_7,i_7]) \quad \text{since $\alpha$ has order $2$.}$$
It follows that
$$(\alpha,0) \#_{10} (\alpha,0) \#_{10} (\alpha,0) \#_{10} (\alpha,0) =(0,0)  \quad \text{since the Whitehead product $[i_7,i_7]$ has order 2.}$$

This shows that the group $[J_{10}(\S^1),\Omega M^7]$ contains a copy of the group $\Z_4$ determined
by the element $\alpha$.
Hence, $[J_{10}(\S^1),\Omega M^7]$ is abelian but it is not isomorphic to the direct sum $\pi_6(M^7)\oplus\cdots\oplus\pi_{11}(M^7)$. Consequently,
the short exact sequence
$$0\longrightarrow \pi_{11}(M^7)\longrightarrow[J_{10}(\S^1),\Omega M^7]\longrightarrow[J_9(\S^1),\Omega M^7]\cong\pi_6(M^7)\oplus\cdots\oplus\pi_{10}(M^7)\longrightarrow 0$$
does not split.
\end{example}

In Example \ref{ex3}, the group $[J_{k}(\S^1),\Omega \S^{4}]$ is non-abelian for $k\ge 7$. We now generalize this example in the following proposition.

\begin{proposition} \label{1-2_stems}{\em
\mbox{\em (1)} The group $[J_{4n-1}(\S^1),\Omega \S^{2n}]$ is non-abelian if and only if
$n$ is a power of $2$. In particular, when $n$ is a power of $2$, $[J_k(\S^1),\Omega \S^{2n}]$ is non-abelian for $k\ge 4n-1$.
When $n$ is not a power of $2$, $[J_k(\S^1),\Omega \S^{2n}]$ is abelian for $k\leq 4n$.
\par \mbox{\em (2)} The group $[J_k(\S^1),\Omega M^{2n}]$ is non-abelian for $k\ge 4n-1$ provided $n$ is a power of $2$.}
\end{proposition}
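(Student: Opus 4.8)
The plan is to reduce the whole question of commutativity in $[J_{4n-1}(\S^1),\Omega\S^{2n}]$ to a single Whitehead product and then to read the answer off Table \eqref{table}. Since $\S^{2n}$ is $(2n-1)$-connected, an element of $[J_{4n-1}(\S^1),\Omega\S^{2n}]\cong\prod_{i=2n}^{4n}\pi_i(\S^{2n})$ has nonzero coordinates only in degrees $2n\le i\le 4n$. Inspecting the multiplication rule \eqref{J_n-product}, a bracket $[(\alpha_1)_p,(\beta_1)_q]$ can be nonzero only when $p,q\ge 2n$ and $p+q-1\le 4n$, forcing $\{p,q\}=\{2n,2n\}$ or $\{p,q\}=\{2n,2n+1\}$. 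The self-pairing $[\iota_{2n},\iota_{2n}]\in\pi_{4n-1}(\S^{2n})$ contributes nothing to any commutator, since both indices are odd in the sense of Theorem \ref{abelian} and the corresponding entry of Table \eqref{table} is $\Delta=0$. Thus the entire deviation from commutativity is carried by $\alpha=\iota_{2n}\in\pi_{2n}(\S^{2n})$ and $\beta=\eta_{2n}\in\pi_{2n+1}(\S^{2n})$, whose Fox coefficient, read from the ``$n$ odd, $m$ even'' row of Table \eqref{table} with table-parameters $2n-1$ and $2n$, is $\Delta=\binom{2n-1}{n}$.

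By Theorem \ref{abelian}, $\alpha$ and $\beta$ commute in $[J(\S^1),\Omega\S^{2n}]$ exactly when the order $k$ of $[\iota_{2n},\eta_{2n}]$ divides $\binom{2n-1}{n}$. Two inputs then finish Part (1). First, a purely arithmetic fact: by Kummer's theorem the $2$-adic valuation of $\binom{2n-1}{n}=\binom{n+(n-1)}{n}$ is the number of carries in adding $n$ and $n-1$ in base $2$, and there are none precisely when $n$ is a power of $2$; hence $\binom{2n-1}{n}$ is odd iff $n$ is a power of $2$. Second, the topological input I regard as the crux: the Whitehead product $[\iota_{2n},\eta_{2n}]$ is nonzero of order exactly $2$ (that the order divides $2$ is forced by $2[\iota_{2n},\eta_{2n}]=[\iota_{2n},2\eta_{2n}]=0$). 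Granting this, $\binom{2n-1}{n}[\iota_{2n},\eta_{2n}]\ne 0$ iff $\binom{2n-1}{n}$ is odd iff $n$ is a power of $2$, which is the asserted equivalence; the case $n=1$ is included, where one uses only that $[\iota_2,\eta_2]$ has order $2$ (that $\eta_2$ itself has infinite order is irrelevant, as Theorem \ref{abelian} depends only on the order of the bracket).

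For the ``in particular'' clauses I would argue as follows. When $n$ is a power of $2$, the natural surjection $j^\ast:[J_k(\S^1),\Omega\S^{2n}]\to[J_{4n-1}(\S^1),\Omega\S^{2n}]$ has non-abelian target for every $k\ge 4n-1$, so its source is non-abelian. When $n$ is not a power of $2$, I must check that no commutator survives up to stage $J_{4n}$, whose top coordinate is $\pi_{4n+1}(\S^{2n})$. Running through \eqref{J_n-product} once more, the only pairings feeding $\pi_{4n+1}$ are $\{2n,2n+2\}$, with $\Delta=0$ (both indices odd), and $\{2n+1,2n+1\}$, the self-pairing of $\eta_{2n}$, whose coefficient is $\binom{2n}{n}=2\binom{2n-1}{n}$; since this is always even and $[\eta_{2n},\eta_{2n}]$ again has order dividing $2$, it vanishes. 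Together with $\Delta=\binom{2n-1}{n}$ being even in degree $4n$, every commutator is trivial, so $[J_{4n}(\S^1),\Omega\S^{2n}]$, and hence each quotient $[J_k(\S^1),\Omega\S^{2n}]$ with $k\le 4n$, is abelian.

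Part (2) follows the same template with $\S^{2n}$ replaced by the Moore space $M^{2n}$, which is $(2n-2)$-connected with $\pi_{2n-1}(M^{2n})=\Z_2\langle i_{2n}\rangle$. The pairing responsible for non-commutativity is again one whose Fox coefficient equals $\binom{2n-1}{n}$: a bracket $[i_{2n},\xi]$ with $\xi\in\pi_{2n+1}(M^{2n})$ (both table-indices $2n-2,2n$ even, giving $\Delta=\binom{2n-1}{n-1}=\binom{2n-1}{n}$), landing in $\pi_{4n-1}(M^{2n})$ and so visible already at stage $J_{4n-1}$. For $n$ a power of $2$ this coefficient is odd, so it suffices to exhibit one such nonzero Whitehead product in $M^{2n}$; this, with its order, I would extract from the known descriptions of $\pi_\ast(M^{2n})$ and their Whitehead products, in the spirit of the Mukai computations of Examples \ref{JM-ex1}--\ref{JM-ex2}, and persistence to all $k\ge 4n-1$ is immediate from surjectivity of $j^\ast$. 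The \emph{main obstacle} is precisely this non-vanishing: everything else is forced by Theorems \ref{DMP} and \ref{abelian} together with elementary $2$-adic arithmetic, whereas establishing $[\iota_{2n},\eta_{2n}]\ne 0$ (and its $M^{2n}$ analogue) for every power of $2$ is the genuinely homotopy-theoretic step. For small $n$ it is visible in Toda's tables and in Example \ref{ex3}; in general, since these brackets suspend to zero and are invisible stably, I expect to detect them by an unstable invariant such as a James--Hopf invariant or a functional Steenrod operation on the mapping cone.
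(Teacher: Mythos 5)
Your overall strategy coincides with the paper's: both reduce commutativity of $[J_{4n-1}(\S^1),\Omega\S^{2n}]$ to the single pairing $\pi_{2n}\times\pi_{2n+1}$ (the self-pairing of $\pi_{2n}$ dies because both Fox parameters are odd), both read off the coefficient $\Delta=\binom{2n-1}{n}$ from Table \eqref{table}, and both settle its parity by elementary number theory (you via Kummer's theorem, the paper via Lucas' theorem, quoting \cite{fine} --- an immaterial difference). Your stage-$4n$ analysis is in fact slightly more complete than the paper's, which treats $[\iota_{2n},\eta_{2n}^2]$ but is silent on the self-pairing $\{2n+1,2n+1\}$ that you dispose of via the even coefficient $\binom{2n}{n}=2\binom{2n-1}{n}$; and the passage to all $k\ge 4n-1$ by surjectivity of $j^\ast$ is the same in both.

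The gap is the one you flag yourself, and it is real, not a formality. The paper does not prove the key non-vanishing statements either, but it closes them by citation: $[\eta_{2n},\iota_{2n}]\ne 0$ is quoted from \cite[p.\ 404]{GM}, and for the Moore space it uses the specific product $[i_{2n}\eta_{2n-1},\tilde{\eta}_{2n-1}]\ne 0$ of \cite[Lemma 3.8]{G-M} --- a pairing of $\pi_{2n}(M^{2n})$ with $\pi_{2n+1}(M^{2n})$, not your $[i_{2n},\xi]$ with $i_{2n}\in\pi_{2n-1}(M^{2n})$, whose non-triviality is a separate claim you never verify. Your proposed substitutes (James--Hopf invariants, functional operations) are not carried out, so both parts of your argument rest on unproven assertions. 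Worse, one of them is false: $[\iota_2,\eta_2]=0$. Indeed $\eta_2$ lies in the Gottlieb group $G_3(\S^2)=\pi_3(\S^2)$, because the orbit map $\mathrm{SO}(3)\to\S^2$ is surjective on $\pi_3$ (equivalently, $[\iota_2,\eta_2]=P(\eta_5)=0$ since $H(\nu')=\eta_5$ in the EHP sequence), and Gottlieb elements annihilate all Whitehead products. Hence $[J_3(\S^1),\Omega\S^2]$ is abelian and your sentence ``the case $n=1$ is included'' is exactly backwards: for even spheres the classical fact is that $[\iota_{2m},\eta_{2m}]$ vanishes precisely when $2m=2$ or $6$, which is why the paper's own proof draws its conclusions only for $n=2^{l}$ with $l>0$. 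So the crux must be crossed by invoking that known result (valid for every $n=2^l$ with $l\ge 1$, since then $2n\ne 2,6$), and the case $n=1$ must be excluded rather than argued.
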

\begin{proof}
(1): Given a prime $p$, consider the base $p$ expansions of the integers $m$ and $n$, where we
assume $0\le m\le n$:
\noindent
$n=a_0+pa_1+\cdots+p^ka_k,$ $0\le a_i<p$ and $m=b_0+pb_1+\cdots+p^kb_k$, $0\le b_i< p$.
Then, by Lukas' Theorem (see e.g., \cite{fine}),
$${n\choose m}\equiv \prod^k_{i=0}{a_i\choose b_i}(\bmod\; p).$$
Hence, ${n\choose m}\not\equiv 0\;(\bmod\; p)$ if and only if $b_i\le a_i$ for all $0\le i\le k$.
\par In particular, for $0\le m\le n$ and $p=2$ with expansions
$n=a_0+2a_1+\cdots+2^ka_k,$ $0\le a_i<2$ and $m=b_0+2b_1+\cdots+2^kb_k$, $0\le b_i< 2$,
the number ${n\choose m}$ is odd if and only if $b_i\le a_i$ for all $0\le i\le k$.
Thus, following Table (\ref{table}), $\Delta={2n-1\choose n}$ is odd if and only if $n=2^l$ for some $l\ge 0$.
Since $[\eta_{2n},\iota_{2n}]\ne 0$ (see \cite[p.\ 404]{GM}) and $\Delta\ne 0$, we have $(0,\ldots,0,\iota_{2n},0\ldots,0)\#_k(0,\ldots,0,\eta_{2n},0\ldots,0)\ne(0,\eta_{2n},0\ldots,0)\#_k(\iota_{2n},0\ldots,0)$
and consequently $[J_{4n-1}(\S^1),\Omega \S^{2n}]$ is non-abelian if, and only if,
$n$ is a power of $2$. Hence, $[J_k(\S^1),\Omega\S^{2n}]$ is non-abelian for $k\ge 4n-1$ when $n=2^l$ for some $l>0$.

When $n$ is not a power of $2$, we only need to consider the Whitehead product $[\iota_{2n},\eta_{2n}^2]$ because other Whitehead products from lower
dimensions lie in the abelian group $[J_{4n-1}(\S^1),\Omega \S^{2n}]$. Since both $2n-1$ and $(2n+2)-1$ are both odd, it follows from Table (\ref{table})
that $\Delta=0$, i.e., $\iota_{2n}\#_{4n}\eta_{2n}^2=\eta_{2n}^2\#_{4n}\iota_{2n}$. Thus, we conclude that $[J_{4n}(\S^1),\Omega \S^{2n}]$ is abelian and hence so is  $[J_{k}(\S^1),\Omega \S^{2n}]$ for any $k\le 4n$.

(2): Let $\tilde{\eta}_2\in \pi_4(M^3)$ be a lift of $\eta_3\in\pi_4(\S^3)$ satisfying $2\tilde{\eta}_2=i_n \eta^2_2$,
$p_3\tilde{\eta}_2=\eta_3$ and set $\tilde{\eta}_n=\Sigma^{n-2}\tilde{\eta}_2$ for $n\ge 2$.
Then $\pi_{2n+1}(M^{2n})= \Z_4\langle \tilde{\eta}_{2n-1}\rangle$ and, by \cite[Lemma 3.8]{G-M}, it holds
$[i_{2n}\eta_{2n-1},\tilde{\eta}_{2n-1}]\ne 0$ for $n\ge 2$. Then, we can deduce as in (1)
that $[J_k(\S^1),\Omega M^{2n}]$ is non-abelian for $k\ge 4n-1$ and $n=2^l$  for some $l>0$.
\end{proof}

Based upon the results in Proposition \ref{1-2_stems}, the next question is whether $[J_{4n+1}(\S^1),\Omega\S^{2n}]$ is abelian when $n$ is not a power of $2$. Proposition \ref{1-2_stems} (1) depends on certain divisibility properties of certain types of binomial coefficients. In the next result, we answer
this question by exploring further such divisibility results concerning the Catalan numbers and thereby strengthen Proposition \ref{1-2_stems}.
Let $T^*(01)$ denote the set of natural numbers $n$ with $(n)_3=(n_i)$ and $n_i\in \{0,1\}$ for $i\ge 1$, where $(n)_3$ is the base $3$ expansion of $n$. Further, denote by $T^*(01)-1$ the set $\{n-1|n\in T^*(01)\}$.
\par Following Table (\ref{table}), we get $\Delta=\binom{2n}{n-1}$. Since $\binom{2n}{n-1}+\binom{2n}{n}=\binom{2n+1}{n}=\frac{2n+1}{n+1}\binom{2n}{n}$,
it follows that
\begin{equation}\label{catalan}
\Delta=\binom{2n}{n-1}=\binom{2n}{n}\left(\frac{2n+1}{n+1}-1\right)=\frac{n}{n+1}\binom{2n}{n}=nC_n,
\end{equation}
where $C_n$ is the $n$-th Catalan number.
\par Write $\nu_{2n}$ for a generator of $\pi_{2n+3}(\S^{2n})$ with $n\ge 2$ and consider the Whitehead product $[\iota_{2n},\nu_{2n}]$.
Suppose $n$ is not a power of $2$. According to \cite[p.\ 405]{GM}, the order of $[\iota_{2n},\nu_{2n}]$ is $12$ if $n$ is odd or of order $24$ if $n$ is even.

\begin{proposition}\label{3_stem}{\em
Suppose $n\ne 2^{\ell}$ for any $\ell>0$. Then $[J_{4n+1}(\S^1),\Omega \S^{2n}]$ is abelian if and only if,
\begin{itemize}
\item[(1)] \mbox{\em (i)} $n\ne 2^a-1$ and $n\ne 2^a+2^b-1$ for some $b>a\ge 0$;
\noindent
and
\noindent
\mbox{\em (ii)} $n\equiv 0\, (\bmod\, 3)$ or $n\notin T^*(01)-1$ when $n$ is odd;
\item[(2)] $n\equiv 0\, (\bmod \,3)$ or $n\notin T^*(01)-1$ when $n$ is even.
\end{itemize}}
\end{proposition}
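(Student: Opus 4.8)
The plan is to reduce the abelianness of $[J_{4n+1}(\S^1),\Omega\S^{2n}]$ to a single divisibility condition on $\binom{2n}{n-1}=nC_n$ coming from one Whitehead product, and then to translate that divisibility into the stated arithmetic conditions by separating the primes $2$ and $3$. First I would invoke Proposition~\ref{1-2_stems}: since $n$ is not a power of $2$, the quotient $[J_{4n}(\S^1),\Omega\S^{2n}]$ is already abelian. The projection $[J_{4n+1}(\S^1),\Omega\S^{2n}]\to[J_{4n}(\S^1),\Omega\S^{2n}]$ is a central extension with kernel $\pi_{4n+2}(\S^{2n})$, so by the bi-additivity of the commutator pairing of such an extension, the group $[J_{4n+1}(\S^1),\Omega\S^{2n}]$ is abelian if and only if every Whitehead product landing in $\pi_{4n+2}(\S^{2n})$ yields commuting generators. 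For dimensional and connectivity reasons the only such products arise from the pairs $(\iota_{2n},\nu_{2n})$ and $(\eta_{2n},\eta_{2n}^2)$. The pair $(\eta_{2n},\eta_{2n}^2)$ gives no obstruction: since $2\eta_{2n}=0$ the product $[\eta_{2n},\eta_{2n}^2]$ has order dividing $2$, while Table~\eqref{table} gives $\Delta=\binom{2n}{n}$, which is even; hence by Theorem~\ref{abelian} these commute. Thus abelianness is equivalent to the commuting of $\iota_{2n}$ and $\nu_{2n}$.

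Next I would apply Theorem~\ref{abelian} to $\alpha=\iota_{2n}\in\pi_{2n}(\S^{2n})$ and $\beta=\nu_{2n}\in\pi_{2n+3}(\S^{2n})$. The two relevant parities are odd and even, so we are in the divisibility branch, with $\Delta=\binom{2n}{n-1}=nC_n$ as recorded in \eqref{catalan}, and with the order of $[\iota_{2n},\nu_{2n}]$ equal to $k=12$ for $n$ odd and $k=24$ for $n$ even. Consequently $[J_{4n+1}(\S^1),\Omega\S^{2n}]$ is abelian if and only if $k\mid nC_n$. Since $12=2^2\cdot 3$ and $24=2^3\cdot 3$, and the prime powers involved are coprime, this single condition splits into a $2$-adic part and the common $3$-adic part $3\mid nC_n$, which I would analyse separately.

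For the $2$-adic part I would use Kummer's theorem in the form $v_2(C_n)=s_2(n)-v_2(n+1)$, where $s_2$ denotes the binary digit sum, so that $v_2(nC_n)=v_2(n)+s_2(n)-v_2(n+1)$. When $n$ is even, $n+1$ is odd and $s_2(n)\ge 2$ (as $n$ is not a power of $2$), whence $v_2(nC_n)\ge v_2(n)+s_2(n)\ge 3$; thus $8\mid nC_n$ automatically and no $2$-adic condition survives in case~(2). When $n$ is odd, $v_2(nC_n)=s_2(n)-v_2(n+1)$, and writing $n+1=2^c m$ with $m$ odd one checks that this equals $s_2(m-1)$. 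Then $4\mid nC_n$ fails exactly when $s_2(m-1)\le 1$, that is when $m=1$ (giving $n=2^a-1$) or $m-1$ is a power of $2$ (giving $n=2^a+2^b-1$ with $b>a\ge 0$). This is precisely condition~(i).

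For the $3$-adic part, if $3\mid n$ then $3\mid nC_n$ trivially, which accounts for the disjunct $n\equiv 0\,(\bmod\,3)$; otherwise $3\mid nC_n$ if and only if $3\mid C_n$. The hard part will be the clean characterisation of $3\mid C_n$: applying Lucas' theorem \cite{fine} (equivalently Kummer's theorem) to $\binom{2n}{n-1}$ from \eqref{catalan}, one shows that $3\nmid C_n$ exactly when every base-$3$ digit of $n+1$ beyond the units digit lies in $\{0,1\}$, i.e. when $n+1\in T^*(01)$, equivalently $n\in T^*(01)-1$. Hence $3\mid nC_n$ if and only if $n\equiv 0\,(\bmod\,3)$ or $n\notin T^*(01)-1$, which is condition~(ii) (and the sole condition in case~(2)). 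Combining the two analyses, for $n$ odd abelianness is equivalent to (i) together with (ii), and for $n$ even to the $3$-adic condition alone, completing the proof. I expect the genuine difficulty to be confined to this last digit-counting characterisation of the $3$-divisibility of the Catalan numbers; the topological reduction in the first two paragraphs follows formally from the results already established.
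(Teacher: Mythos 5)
Your proposal is correct and takes essentially the same route as the paper's proof: both reduce abelianness to whether the order ($12$ for $n$ odd, $24$ for $n$ even) of $[\iota_{2n},\nu_{2n}]$ divides $\Delta=\binom{2n}{n-1}=nC_n$ from Table \eqref{table} and \eqref{catalan}, and then translate the $4$- and $3$-divisibility of $C_n$ into the stated binary and ternary digit conditions. The only differences are minor: you derive the mod-$4$ criterion yourself via Kummer's theorem and explicitly dispose of the pair $(\eta_{2n},\eta_{2n}^2)$, whereas the paper simply cites \cite[Theorem 2.3]{ely} and \cite[Theorem 5.2]{ds} for the two divisibility characterizations and treats $[\iota_{2n},\nu_{2n}]$ as the only obstruction without comment.
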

\begin{proof}
When $n$ is odd, $\Delta=nC_n$ is divisible by $12$ if and only if, $C_n$ is divisible by $4$ and either $n$ or $C_n$ is divisible by $3$. Similarly, when $n$ is even,  $\Delta=nC_n$ is divisible by $24$ if and only if, $C_n$ is divisible by $4$ (since $n$ is even) and either $n$ or $C_n$ is divisible by $3$.
The result follows from \cite[Theorem 5.2]{ds} for the divisibility of $C_n$ by $3$ and from \cite[Theorem 2.3]{ely} for the divisibility of $C_n$ by $4$.
\end{proof}

\begin{remark}
Let $n=29$. Then, $C_{29}$ is divisible by $4$ but $29$ is not divisible by $3$ and $C_{29}$ is not divisible by $3$ since $29\in T^*(01)-1$. Thus, in case
(1) in Proposition \ref{3_stem}, (i) is satisfied but (ii) is not.

Let $n=34$. Then, $C_{34}$ is divisible by $4$ and by $3$ but $34$ is not divisible by $3$.
\end{remark}

The following result generalizes Example \ref{ex2}.
\begin{proposition}{\em
The group $[J_{4n-1}(\S^1),\Omega(\S^{2n}\vee\S^{2n+1})]$ is non-abelian.}
\end{proposition}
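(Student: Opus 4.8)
The plan is to produce two elements of $[J_{4n-1}(\mathbb S^1),\Omega Y]$, with $Y=\mathbb S^{2n}\vee\mathbb S^{2n+1}$, that fail to commute. Let $\iota_1\in\pi_{2n}(Y)$ and $\iota_2\in\pi_{2n+1}(Y)$ be the classes of the two wedge inclusions, so that the basic Whitehead product $[\iota_1,\iota_2]$ lies in $\pi_{4n}(Y)$. The first step is to record that, by Hilton's theorem, $[\iota_1,\iota_2]$ generates a free summand $\mathbb Z$ of $\pi_{4n}(Y)$; in particular it is nonzero and of infinite order. Since $[J_{4n-1}(\mathbb S^1),\Omega Y]\cong\prod_{i=2}^{4n}\pi_i(Y)$ as sets, this class is carried by the top factor $\pi_{4n}(Y)$, which is exactly where the stage-$(4n-1)$ perturbations are recorded.

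Next I would take $A,B\in\prod_{i=2}^{4n}\pi_i(Y)$ defined by placing $\iota_1$ in the $\pi_{2n}(Y)$-coordinate of $A$ and $\iota_2$ in the $\pi_{2n+1}(Y)$-coordinate of $B$, with all remaining coordinates zero. Applying the product formula \eqref{J_n-product} (with $\#_{4n-1}$, so the sum runs over $k+j=4n+1$), the only nonzero Whitehead-product term of $A\#_{4n-1}B$ arises from the pair $(k,j)=(2n,2n+1)$ and carries the coefficient $\phi(2n-1,4n-2)$, whereas the only such term of $B\#_{4n-1}A$ arises from $(k,j)=(2n+1,2n)$ and carries the coefficient $\phi(2n,4n-2)$ attached to $[\iota_2,\iota_1]$. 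All other coordinates of the two products agree, because the only Whitehead product entering the formula for the pair $(A,B)$ is the cross term $[\iota_1,\iota_2]$, and this class is recorded solely in the top factor $\pi_{4n}(Y)$.

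The computation is then closed by reading off the two coefficients from Section 2. Since $2n-1$ is odd and $4n-2$ is even, Proposition \ref{odd-even}(1) gives $\phi(2n-1,4n-2)=0$, while Proposition \ref{Pascal} gives $\phi(2n,4n-2)=-\binom{2n-1}{n}$. As $2n\cdot(2n+1)$ is even, $[\iota_2,\iota_1]=[\iota_1,\iota_2]$, so $A\#_{4n-1}B$ and $B\#_{4n-1}A$ differ in the top coordinate by precisely $\binom{2n-1}{n}\,[\iota_1,\iota_2]$. Equivalently, this is the entry $\Delta=\binom{2n-1}{n}$ of the row ``$n$ odd, $m$ even'' of Table \eqref{table}, read with the two parameters $2n-1$ and $2n$; conceptually it is the $k=\infty$ case of Theorem \ref{abelian}, where commutativity requires both parameters to be odd, which fails here since $2n$ is even.

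The step I expect to demand the most care is verifying that this difference is genuinely nonzero, i.e.\ that the group is actually non-abelian rather than merely governed by a nonzero coefficient. This is where the first step pays off: $\binom{2n-1}{n}\ge 1$ for every $n\ge 1$ and $[\iota_1,\iota_2]$ has infinite order, so $\binom{2n-1}{n}\,[\iota_1,\iota_2]\neq 0$ in $\pi_{4n}(Y)$. Hence $A\#_{4n-1}B\neq B\#_{4n-1}A$ and $[J_{4n-1}(\mathbb S^1),\Omega Y]$ is non-abelian, recovering Example \ref{ex2} when $n=1$.
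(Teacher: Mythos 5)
Your proposal is correct and takes essentially the same route as the paper: the same homogeneous elements built from the wedge inclusions, the product formula \eqref{J_n-product} at stage $4n-1$ giving the coefficients $\phi(2n-1,4n-2)=0$ and $\phi(2n,4n-2)=-\binom{2n-1}{n}$, and Hilton's theorem to guarantee that $[\iota_1,\iota_2]$ generates an infinite cyclic summand of $\pi_{4n}(\S^{2n}\vee\S^{2n+1})$. If anything, your write-up is slightly more careful than the paper's, which displays the two products without naming the coefficients and leaves the infinite-order point (needed since $\binom{2n-1}{n}>1$ for $n\ge 2$) implicit.
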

\begin{proof}
Consider the inclusion maps $i_1 : \S^{2n}\hookrightarrow\S^{2n}\vee\S^{2n+1}$ and $i_2 :\S^{2n+1}\hookrightarrow\S^{2n}\vee\S^{2n+1}$.
Then, Hilton's result \cite{H} asserts that for every positive integer $k$, there is an isomorphism
$$\Theta :\bigoplus_{l=1}^\infty\pi_k(\S^{n_l})\stackrel{\cong}{\longrightarrow} \pi_k(\S^{2n}\vee\S^{2n+1}),$$
where the restriction $\Theta|_{\pi_k(\S^{n_l})}=\omega_{l\ast} :\pi_k(\S^{n_l})\to \pi_k(\S^{2n}\vee\S^{2n+1})$
is determined by the iterated Whitehead product of the maps $i_1$ and $i_2$.
In particular, the Whitehead product $[i_1,i_2] : \S^{4n}\to  \S^{2n}\vee\S^{2n+1}$ is non-trivial.
Furthermore, $$(0,\ldots,0,i_1,0,\ldots)\#_{4n-1}(0,\ldots,0,i_2,0,\ldots)=(0,\ldots,0,i_1,0,\ldots,0,i_2,0,\ldots)$$ and
$$(0,\ldots,0,i_2,0,\ldots)\#_{4n-1}(0,\ldots,0,i_1,0,\ldots)=(0,\ldots,0,i_1,0,\ldots,0,i_2,0,\ldots,0,[i_1,i_2],0\ldots).$$
The above implies that the group $[J_{4n-1}(\S^1),\Omega(\S^{2n}\vee\S^{2n+1})]$ is non-abelian.
\end{proof}

To close the paper, we derive few simple properties about the torsion elements in $[J(\S^1), \Omega Y]$.
\begin{proposition} {\em Let $\alpha, \beta\in [J(\S^1), \Omega Y]$ be two elements which correspond to
homogeneous sequences with $\alpha\in \pi_m(Y)$ and $\beta\in \pi_n(Y)$.  Then:

\mbox{\em (1)} if $\alpha$ has order $k$ in $\pi_m(Y)$, then $\alpha$, regarded as an element
of $[J(\S^1), \Omega Y]$, has order $k$ or $k^2$;

\mbox{\em (2)} if $\alpha,\beta$ are torsion elements of order $|\alpha|$ and $|\beta|$, respectively such that ${\rm gcd}(|\alpha|,|\beta|)=1$, then $\alpha$ and $\beta$ commute in $[J(\S^1), \Omega Y]$.}
\end{proposition}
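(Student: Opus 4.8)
The plan is to work inside the explicit inductive product $\#$ on $[J(\S^1),\Omega Y]\cong\prod_{i\ge 2}\pi_i(Y)$ given by \eqref{J_n-product}, exploiting that the coordinates are graded by degree and that every Whitehead correction $[(\alpha_1)_k,(\beta_1)_j]$ lands in the strictly higher degree $\pi_{k+j-1}(Y)$. Since both $\alpha$ and $\beta$ are homogeneous, the cyclic subgroups they generate are supported in degrees $\ge m$ (resp.\ $\ge n$), and on such elements the projection $\rho$ onto the lowest relevant coordinate is a group homomorphism, because that coordinate of a product is simply the sum of the two coordinates. This homomorphism controls all lower bounds on orders, while the subtlety is confined to the higher Whitehead-product coordinates.

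Part (2) is the clean half. As $\alpha\in\pi_m(Y)$ and $\beta\in\pi_n(Y)$ are homogeneous, formula \eqref{J_n-product} shows that the only Whitehead correction appearing in $\alpha\#\beta$ or in $\beta\#\alpha$ is a scalar multiple of $[\alpha,\beta]\in\pi_{m+n-1}(Y)$ (no iterated products occur, there being no intermediate nonzero coordinates). By bilinearity, $|\alpha|\,[\alpha,\beta]=[\,|\alpha|\alpha,\beta]=0$ and $|\beta|\,[\alpha,\beta]=0$, so $\operatorname{ord}([\alpha,\beta])$ divides $\gcd(|\alpha|,|\beta|)=1$, i.e.\ $[\alpha,\beta]=0$. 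Feeding this into Theorem \ref{abelian} (equivalently, reading off Table \eqref{table}) kills both perturbation terms, so $\alpha\#\beta=\alpha+\beta=\beta\#\alpha$ and the elements commute.

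For (1) I would first extract $k\mid\operatorname{ord}(\alpha)$ from $\rho$, since $\rho(\alpha)=\alpha$ has order $k$ in $\pi_m(Y)$. For the upper bound I would compute $\alpha^{\#N}$ inductively from \eqref{J_n-product}. Because $\langle\alpha\rangle$ is abelian and $\alpha$ is homogeneous, one shows by induction along the James (weight) filtration that, for each $r\ge 2$, the $\pi_{rm-(r-1)}(Y)$-coordinate of $\alpha^{\#N}$ is an integer-valued polynomial in $N$ (built from the Fox coefficients $\phi$ of Theorem \ref{main-phi} and binomial coefficients) applied to an $r$-fold iterated Whitehead product of $\alpha$ with itself; the weight-$2$ term is $\binom{N}{2}\,\phi(m-1,2m-3)\,[\alpha,\alpha]$. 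Each such iterated product contains $\alpha$ as an entry, so $k\alpha=0$ and bilinearity make it $k$-torsion. Setting $N=k$ gives $\rho(\alpha^{\#k})=0$, and a divisibility check — the coefficients acquire a factor of $k$ when $N$ is replaced by $k^2$ (for instance $k\mid\binom{k^2}{2}$) — yields $\alpha^{\#k^2}=0$. Hence $\operatorname{ord}(\alpha)$ is a multiple of $k$ dividing $k^2$, equal to $k$ exactly when $\alpha^{\#k}=0$ and to $k^2$ when $\alpha^{\#k}$ has full order $k$.

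The hard part is ruling out the intermediate values and showing the order is exactly $k$ or $k^2$. This amounts to proving that whenever $\alpha^{\#k}\ne 0$ its order is forced to be $k$ rather than a proper divisor of $k$. The difficulty is that a single coordinate of $\alpha^{\#k}$ — say the weight-$2$ term $\binom{k}{2}\phi(m-1,2m-3)[\alpha,\alpha]$ — may only have proper-divisor order, while a higher-weight coordinate restores the full order $k$, so one must track all weights simultaneously. I expect the crux to be identifying the weight at which the full order $k$ is realized, which requires combining the parity computations of $\phi$ in Proposition \ref{odd-even} and Theorem \ref{main-phi} with the orders of the iterated self-Whitehead products $[\alpha,\alpha],\,[[\alpha,\alpha],\alpha],\dots$.
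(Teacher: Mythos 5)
Your part (2) is correct and is precisely the paper's argument: homogeneity means the only perturbation appearing in $\alpha\#\beta$ or $\beta\#\alpha$ is a multiple of $[\alpha,\beta]$, and bilinearity together with $\gcd(|\alpha|,|\beta|)=1$ forces $[\alpha,\beta]=0$, so both products equal $\alpha+\beta$.

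Part (1) has a genuine gap, and it sits exactly where you flag it: your scheme requires, for every weight $r\ge 2$, control of an unidentified integer polynomial coefficient multiplying an $r$-fold iterated self-Whitehead product of $\alpha$, and you establish the needed $k$-divisibility only at weight $2$ (the $\binom{N}{2}\phi(m-1,2m-3)[\alpha,\alpha]$ term); the last paragraph of your proposal is a plan, not a proof, and without further input there are a priori infinitely many weights to handle. The missing idea is not a finer analysis of the Fox function $\phi$ but a classical vanishing theorem, which is what the paper invokes (\cite[Chapter XI, Section 8, Theorem 8.8]{Whi}): all iterated Whitehead products of weight $\ge 3$ of an odd-dimensional element vanish, and all of weight $\ge 4$ of an even-dimensional element vanish. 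This truncates the entire computation at weight $3$: $\alpha^{\#k}$ is a sequence of the form $(0,\ldots,0,k\alpha,\lambda_1[\alpha,\alpha],\lambda_2[\alpha,[\alpha,\alpha]],0,\ldots)=(0,\ldots,0,\lambda_1[\alpha,\alpha],\lambda_2[\alpha,[\alpha,\alpha]],0,\ldots)$, and raising this element to the $k$-th power creates no new perturbation terms at all (any such term would be an iterated product of weight $\ge 4$ in $\alpha$), so $(\alpha^{\#k})^{\#k}=(0,\ldots,0,k\lambda_1[\alpha,\alpha],k\lambda_2[\alpha,[\alpha,\alpha]],0,\ldots)=(0,\ldots,0,\lambda_1[k\alpha,\alpha],\lambda_2[k\alpha,[\alpha,\alpha]],0,\ldots)=0$ by bilinearity. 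No polynomial bookkeeping, no divisibility of $\binom{k^2}{r}$, and no induction along the James filtration is needed once this theorem is in hand.

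A final remark on what you call ``the hard part'': ruling out orders strictly between $k$ and $k^2$ is not accomplished by the paper either. Both the paper's argument and yours (if completed) prove exactly that the order of $\alpha$ is a multiple of $k$ dividing $k^2$; indeed it equals $k\cdot\mathrm{ord}(\alpha^{\#k})$ where $\mathrm{ord}(\alpha^{\#k})$ divides $k$, and nothing pins $\mathrm{ord}(\alpha^{\#k})$ to $1$ or $k$. So the dichotomy in the statement should be read in that weaker sense, and you should not spend effort trying to extract it from the parity properties of $\phi$ --- that effort is not needed to match the paper's proof, and the paper does not supply it.
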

\begin{proof} (1): By \cite[Chapter XI, Section 8, Theorem 8.8]{Whi}, all Whitehead products of weight $\geq 3$ of an element of  odd dimension and all Whitehead products of weight $\geq 4$ of an  element of even dimension, vanish. Therefore, using our formula and the result above,
we obtain that $\alpha^k$ (as an element of $[J(\S^1), \Omega Y]$) is a sequence of the form $(0,\ldots,0, k\alpha, \lambda_1[\alpha, \alpha], \lambda_2[\alpha, [\alpha, \alpha]],0,\ldots)=
(0\ldots,0, \lambda_1[\alpha, \alpha], \lambda_2[\alpha, [\alpha, \alpha]],0,\ldots)$. Again, by the result cited above, we obtain that
\begin{equation*}
\begin{aligned}
(0,\ldots,0,\lambda_1[\alpha, \alpha], \lambda_2[\alpha, [\alpha, \alpha]],0,\ldots)^k&=(0,\ldots,0, k\lambda_1[\alpha, \alpha], k\lambda_2[\alpha, [\alpha, \alpha]],0,\ldots) \\
&=(0,\ldots,0, \lambda_1[k\alpha, \alpha], \lambda_2[k\alpha, [\alpha, \alpha]],0,\ldots) \\
&=0
\end{aligned}
\end{equation*} and (1) follows.

(2): It suffices to observe that the Whitehead product $[\alpha, \beta]$ vanishes. Since $|\alpha|[\alpha, \beta]=[|\alpha|\alpha, \beta]=0$,
$|\beta|[\alpha, \beta]=[\alpha, |\beta|\beta]=0$ and ${\rm gcd}(|\alpha|,|\beta|)=1$, it follows that $[\alpha, \beta]=0$  and
the proof is complete.
\end{proof}

\end{document}